\title{Integrality of genus zero Gopakumar-Vafa type invariants of semi-positive varieties}
\newtheorem{theorem}{Theorem}[section]
\newtheorem{lemma}[theorem]{Lemma}
\newtheorem{proposition}[theorem]{Proposition}
\newtheorem{corollary}[theorem]{Corollary}
\newtheorem*{main theorem*}{Main Theorem}
\theoremstyle{remark}
\newtheorem{remark}[theorem]{Remark}
\newtheorem{definition}[theorem]{Definition}
\numberwithin{equation}{section}
\newcommand{\vdim}{\operatorname{vdim}}
\newcommand{\M}{\overline{\mathcal{M}}}
\newcommand{\K}{\mathcal{K}}
\newcommand{\vir}{{\rm vir}}
\newcommand{\ev}{{\rm ev}}
\newcommand{\Res}{{\rm Res}}
\newcommand{\td}{{\rm td}}
\newcommand{\ch}{{\rm ch}}
\newcommand{\Tr}{{\rm Tr}}
\newcommand{\fake}{{\rm fake}}
\newcommand{\KK}{{K}}
\newcommand{\Coeff}{{\rm Coeff}}
\newcommand{\leg}{{\rm leg}}
\newcommand{\arm}{{\rm arm}}
\newcommand{\tail}{{\rm tail}}
\def\O{\mathcal{O}}
\newcommand{\GW}{{\mathrm{GW}}}
\newcommand{\GV}{{\mathrm{GV}}}
\newcommand{\QK}{{\mathrm{QK}}}
\newcommand{\ind}{{\mathrm{ind}}}
\author{You-Cheng Chou}
\address{Institute of Mathematics, Academia Sinica,
Taipei, 10617, Taiwan
}
\email{bensonchou@gate.sinica.edu.tw}
\begin{document}

\maketitle
\begin{abstract}
    We give an alternate proof of the integrality conjecture of genus zero Gopakumar-Vafa type invariants on semi-positive varieties using algebraic geometry. The main technique is to relate Gopakumar-Vafa type invariants to quantum $K$-invariants and to utilize the integrality of the latter.
\end{abstract}
\section{Introduction}

Let $X$ be a smooth complex projective variety. (Cohomological) Gromov–Witten invariants of $X$ (with primary field) are defined to be
\[
\begin{split}
    \GW_{g,\beta}(\gamma_1,\dots,\gamma_n) & := \langle \gamma_1,\dots,\gamma_n \rangle_{g,n,\beta}^{X,H}
    \\
    &= \int_{[\M_{g,n}(X,\beta)]^{\vir}} \prod_{i=1}^n \ev_i^*(\gamma_i) \in \mathbb{Q},
\end{split}
\]
where $[\M_{g,n}(X,\beta)]^{\vir}$ is the virtual fundamental class, $\gamma_1,\dots,\gamma_n\in H^*(X)$, and $\ev_i$ is the evaluation map at the $i$-th marked point.


Due to the multiple cover contribution, Gromov-Witten invariants are in general not integers. For Calabi-Yau threefolds, the genus zero multiple cover formula was conjectured by physicists \cite{Candelas_Ossa_Green_Parkes}, reinterpreted in mathematical terms by Aspinwall-Morrison \cite{Aspinwall_Morrison}, and proved rigorously by Voisin \cite{Voisin} and many others. The genus one case was computed in physics \cite{Bershadsky_Cecotti_Ooguri_Vafa} and mathematics \cite{Graber_Pandharipande}. The higher genus case was proved by Faber-Pandharipande \cite{Faber_Pandharipande}. 

The multiple cover formulas lead to integral contributions to the ``BPS invariants'', sometimes in a subtle way. See \cite{Bryan} for some discussion. The BPS invariants, also called Gopakumar-Vafa invariants, can be defined via Gromov-Witten invariants via the remarkable formula (\ref{eqn:GVGW}) by Gopakumar-Vafa \cite{Gopakumar_Vafa_1998}. The integrality of this ad hoc definition has been shown by E. Ionel and
T. Parker \cite{Ionel_Parker_2018}.
\begin{equation} \label{eqn:GVGW}
\begin{split}
& \sum _{g=0}^{\infty }~\sum _{k=1}^{\infty }~\sum _{\beta \in H_{2}(M,\mathbb {Z} )_{>0}}{\GV}_{g,\beta}{\frac {1}{k}}\left(2\sin \left({\frac {k\lambda }{2}}\right)\right)^{2g-2}q^{k\beta }\\
&:=
\sum _{g=0}^{\infty }~\sum _{\beta \in H_{2}(M,\mathbb {Z} )_{>0}}{\GW}_{g,\beta} q^{\beta }\lambda ^{2g-2}.
\end{split}
\end{equation}

In \cite{Klemm_Pandharipande_2008},
A.~Klemm and R.~Pandharipande predicted the genus zero multiple cover formula to all Calabi-Yau $m$-folds for $m\geq 4$. They defined Gopakumar–Vafa type invariants (in terms of Gromov-Witten invariants) and conjectured their integrality. In \cite{Ionel_Parker_2018}, the definition has been generalized to semi-positive (including Calabi-Yau and Fano) variety $X$ with $\dim_{\mathbb{C}} X \geq 3$. It consists of two parts. 
\begin{itemize}
    \item If $-\beta \cdot K_X =0$, it is the multiple cover formula in the form given by Klemm-Pandharipande \cite{Klemm_Pandharipande_2008}
    \[
    \sum_{\substack{ \beta\in H_2(X,\mathbb{Z})_{>0} \\ \beta \cdot K_X=0 }} \GV_{0,\beta}(\gamma_1,\dots,\gamma_n)\sum_{r=1}^{\infty} \frac{q^{r\beta}}{r^{3-n}} := 
    \sum_{\substack{ \beta\in H_2(X,\mathbb{Z})_{>0} \\ \beta \cdot K_X=0 }} \GW_{0,\beta}(\gamma_1,\dots,\gamma_n)q^{\beta}.
    \]
    \item If $-\beta \cdot K_X > 0$, 
    \[
    \GV_{0,\beta} (\gamma_1,\dots,\gamma_n) := \GW_{0,\beta}(\gamma_1,\dots,\gamma_n).
    \]
\end{itemize}
In this paper, we write $\GW_{\beta}(\cdots)$ and $\GV_{\beta}(\cdots)$ for simplicity since we only consider the genus zero case.

The integrality of genus zero Gopakumar-Vafa type invariants has been proved by Ionel–Parker using symplectic geometry.
\begin{theorem}[{\cite[Theorem~9.2]{Ionel_Parker_2018}}] \label{Theorem}
Let $X$ be a semi-positive variety and $\gamma_i \in H^{2*}(X;\mathbb{Z})$ be cohomology classes with even degree. The invariants $\GV_{\beta}(\gamma_1,\dots,\gamma_n)$ are integers. 
\end{theorem}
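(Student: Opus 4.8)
The plan is to realize $\GV_\beta(\gamma_1,\dots,\gamma_n)$ as a \emph{quantum $K$-invariant} --- or, failing that, as a $\Z$-linear combination of such. Set
\[
\chi_{0,n,\delta}(E_1,\dots,E_n):=\chi\!\left(\M_{0,n}(X,\delta),\ \O^{\vir}\otimes\textstyle\bigotimes_{i=1}^n\ev_i^*E_i\right),
\]
for $\delta$ an effective class dividing $\beta$ (i.e.\ $\beta=m\delta$, $m\in\Z_{>0}$) and for integral $K$-theory classes $E_1,\dots,E_n\in K^0(X)$ matched to $\gamma_1,\dots,\gamma_n$ --- here one uses that the $\gamma_i$ have even degree, so that the Chern character connects them to $K$-theory --- where $\O^{\vir}$ is the virtual structure sheaf of the perfect obstruction theory of $\M_{0,n}(X,\delta)$. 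The elementary starting point is that every $\chi_{0,n,\delta}(E_1,\dots,E_n)$ is an \emph{integer}: it is the holomorphic Euler characteristic of an element of the Grothendieck group of coherent sheaves on the proper Deligne-Mumford stack $\M_{0,n}(X,\delta)$ over $\C$. So it suffices to place $\GV_\beta(\gamma_1,\dots,\gamma_n)$ in the $\Z$-span of these numbers.

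The bridge is the virtual Kawasaki-Riemann-Roch theorem of Tonita, which writes $\chi_{0,n,\delta}(E_1,\dots,E_n)$ as the sum of a \emph{fake} cohomological term
\[
\int_{[\M_{0,n}(X,\delta)]^{\vir}}\td\!\big(T^{\vir}\big)\prod_{i=1}^n\ev_i^*\ch(E_i),
\]
with $T^{\vir}$ the virtual tangent complex, and of correction terms indexed by the components of the inertia stack of $\M_{0,n}(X,\delta)$ with nontrivial isotropy. In genus zero these must be classified: an automorphism of order $r$ of a stable map is supported, after passing to a power and decomposing the domain, on a rational subcurve on which it acts faithfully and along which the map has degree a multiple of $r$, so the corresponding inertia component is governed by moduli of stable maps of class $\delta/r$ together with eigenbundle and root-of-unity data. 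Carrying this out at all degrees and repackaging the identities as a generating-function statement, the goal is that $\sum_{\delta}\chi_{0,n,\delta}(E_1,\dots,E_n)\,q^{\delta}$ be obtained from $\sum_{\delta}\GW_{\delta}(\gamma_1,\dots,\gamma_n)\,q^{\delta}$ by a universal transformation which, \emph{for semi-positive $X$}, restricts on the locus $\delta\cdot K_X=0$ to precisely the multiple-cover substitution $q^{\delta}\mapsto\sum_{r\ge 1}q^{r\delta}/r^{3-n}$ --- so that, by the very definition of the Gopakumar-Vafa type invariants in the Klemm-Pandharipande normalization, $\GV_\delta(\gamma_1,\dots,\gamma_n)=\chi_{0,n,\delta}(E_1,\dots,E_n)$ there --- and restricts on the locus $-\delta\cdot K_X>0$ to the identity, so that $\GV_\delta(\gamma_1,\dots,\gamma_n)=\GW_\delta(\gamma_1,\dots,\gamma_n)=\chi_{0,n,\delta}(E_1,\dots,E_n)$ there. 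Semi-positivity --- $-\delta\cdot K_X\ge 0$ for every effective $\delta$ --- enters by pinning down the virtual dimensions of the inertia components, forcing the ``excess'' Todd- and Euler-class factors that would otherwise appear in the Kawasaki corrections to drop out and leaving only the numerical weights $1/r^{3-n}$ in the Calabi-Yau direction and nothing in the Fano direction; the upshot is that the fractional factors $r^{n-3}$ produced by the naive M\"obius inversion of the multiple-cover formula are exactly absorbed into the $r$-power normalizations of the Kawasaki terms, and $\GV_\beta(\gamma_1,\dots,\gamma_n)$ emerges as an integer.

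The step I expect to be the main obstacle is exactly the computation behind this generating-function identity: classifying the genus-zero components of the inertia stack of $\M_{0,n}(X,\beta)$, evaluating their Kawasaki contributions, and showing that for semi-positive $X$ they collapse to the exact multiple-cover series --- with the right powers of $r$, and with the fake term reducing to the honest primary invariant $\GW_\beta(\gamma_1,\dots,\gamma_n)$ with no residual Todd- or descendent-type corrections. This rests on a delicate interplay between the geometry of degree-$r$ covers of rational curves, the behaviour of $T^{\vir}$ and of the classes $\ev_i^*\ch(E_i)$ under restriction to the inertia strata, and the numerical constraint $-\delta\cdot K_X\ge 0$; it is here, and not in the formal integrality of $\chi$, that the real content of the theorem lies.
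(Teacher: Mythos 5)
Your overall route coincides with the paper's: express $\GV_\beta$ as a $\mathbb{Z}$-linear combination of genuine quantum $K$-invariants via Tonita's virtual Kawasaki--Riemann--Roch and use their integrality, with semi-positivity controlling the virtual dimensions of the twisted sectors. However, the central quantitative claim you propose to prove is false, and it fails exactly where the theorem has content. You claim the Kawasaki corrections ``drop out,'' so that the quantum $K$-series is related to the Gromov--Witten series by the multiple-cover substitution $q^{\delta}\mapsto \sum_{r}q^{r\delta}/r^{3-n}$ and hence $\GV_\delta=\chi_{0,n,\delta}$. If that were so, i.e.\ if $\QK_\beta=\GW_\beta$ on the Calabi--Yau locus, then Möbius inversion of the multiple-cover formula would give $\GV_\beta=\sum_{r\mid\ind(\beta)}\mu(r)\,r^{n-3}\,\QK_{\beta/r}$, whose coefficients are \emph{fractional} for $n=1,2$ (namely $\mu(r)/r^{2}$ and $\mu(r)/r$), so integrality would not follow. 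The actual mechanism (Propositions~\ref{prop_QK=GV_1pt} and~\ref{prop_QK=GV_2pt}) is that for $n\le 2$ the stem (type 2 and type 3) contributions do \emph{not} vanish on the $K_X\cdot\beta=0$ locus: evaluated using the constant $r^{-(n-K_X\cdot\beta)}$ of Lemma~\ref{lemma_rpower}, the dilaton equation, and the $\leg$ formula of Remark~\ref{rmk}, they turn the relation into $\QK_\beta(\Gamma)=\GV_\beta(\gamma)+\sum_{r\neq 1} r\,\GV_{\beta/r}(\gamma)$ for $n=1$ and $\QK_\beta=\GV_\beta+\sum_{r\neq 1}\GV_{\beta/r}$ for $n=2$, so the inversion has the integer coefficients $\mu(r)\,r$ and $\mu(r)$. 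Only for $n\ge 3$ do the corrections vanish, giving $\QK_\beta=\GW_\beta$ (Proposition~\ref{prop_QK=GV_npt}), and there the weight $r^{n-3}$ is already an integer. In short, the Kawasaki corrections are not an obstacle to be shown to collapse to the multiple-cover series; their precise nonzero values are what replaces the fractional multiple-cover weights by integer ones.

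A second, smaller gap: $\ch$ is not surjective onto even integral cohomology, so you cannot simply pick $E_i\in K^0(X)$ with $\ch(E_i)$ ``matched'' to $\gamma_i$. The paper instead chooses $\Gamma_i\in K^0(X)$ with $\ch(\Gamma_i)$ equal to $\gamma_i$ only modulo torsion and modulo higher-degree terms, and then checks that the higher-degree ambiguity does not affect the relevant invariants for dimension reasons; your argument needs that step as well.
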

The goal of this paper is to give an alternate proof of this result using only algebraic geometry. In section \ref{sec:quantumK}, we recall some necessary ingredients on A.~Givental and his collaborators' framework about quantum $K$-theory (including the virtual Kawasaki's Hirzebruch-Riemann-Roch formula, the fake theory, and the stem theory). 
In section \ref{sec:proof}, we relate Gopakumar-Vafa type invariants to quantum $K$-invariants. The semi-positive assumption ensures that the virtual dimension of twisted sector (Kawasaki strata) is less than or equal to the virtual dimension of the untwisted one. It will greatly simplify the computation. Finally, we show that the integrality of quantum $K$-invariants will imply the integrality of Gopakumar-Vafa type invariants.

\subsection*{Acknowledgement} I wish to thank Chin-Lung Wang, Sz-Sheng Wang, Nawaz Sultani, and Wille Liu for discussions about this work. Special thanks to Yuan-Pin Lee for reading the early draft of this paper and giving useful comments. This research is supported by Academia Sinica.

\section{Quantum $K$-theory} \label{sec:quantumK}
In this section, we give a limited introduction of Givental and his collaborators' framework about quantum $K$-theory.

\emph{$K$-theoretic Gromov-Witten invariants} are \emph{integral} invariants \cite{Givental_2000, Lee_2004}.
For any smooth projective variety $X$, they are defined as
\[
\begin{split}
\QK_{g,\beta} (\Gamma_1,\dots,\Gamma_n) &:= 
\langle \Gamma_1,\dots,\Gamma_n \rangle_{g,n,\beta}^{X,K}
\\
&=\chi \Big(  \M_{g,n}(X,\beta) ; \Big(  \otimes_{i=1}^n \ev_i^*(\Gamma_i)  \Big) \otimes \O^{\vir}  \Big) \in \mathbb{Z}.
\end{split}
\]
Here $\Gamma_1,\dots,\Gamma_n \in K^0(X)$, the topological $K$-theory of $X$, and $\O^{\vir}$ is the virtual structure sheaf on $\M_{g,n}(X,\beta)$. We denote $\QK_{\beta}(\dots)$ for genus 0 case. It is useful to encode genus zero invariants into power series, called (big) $J$-function. We recall its construction below.


\subsection{The symplectic loop space formalism}

Let $\Lambda = \mathbb{Q}[\![Q]\!]$ be the Novikov ring and 
\[
 \mathbf{K} := K^0(X)\otimes \Lambda.
\]
Givental's loop space for quantum $K$-theory is defined as
\[
\K := K^0(X)(q) \otimes \Lambda.
\]
$\K$ has a natural symplectic structure with the symplectic form $\Omega$,
\[
\K \ni f,g \mapsto \Omega(f,g) := \Big( \Res_{q=0} + \Res_{q=\infty} \Big) ( f(q), g(q^{-1}) )^{K} \frac{dq}{q}.
\]
Here $( \cdot, \cdot )^{K}$ denotes the $K$-theoretic intersection pairing on $\mathbf{K}$:
\[
(a,b)^{K} := \chi(X, a \otimes b) = \int_X \td (T_X) \ch(a) \ch(b).
\]
$\K$ admits the following Lagrangian polarization with respect to $\Omega$:
\[
\begin{split}
    \K &= \K_+ \oplus \K_-
    \\
    & := \mathbf{K}[q,q^{-1}] \, \oplus \, \{ f(q) \in \K | f(0) \neq \infty ,\ f(\infty) =0 \}.
\end{split}
\]
\begin{definition}\label{def:KbigJ}
The \emph{big $J$-function} of $X$ in the quantum $K$-theory is defined as a map $\K_+ \rightarrow \K$:
\[
   \mathbf{t} \mapsto J^{\KK}(\mathbf{t}) := (1-q) + \mathbf{t}(q) + \sum_{\alpha} \Phi^{\alpha}\sum_{n,\beta} \frac{Q^{\beta}}{n!} \langle \frac{\Phi_{\alpha}}{1-qL}, \mathbf{t}(L),\dots, \mathbf{t}(L) \rangle ^{X,K}_{0,n+1,\beta},
\]
where $\{ \Phi_{\alpha} \}$ and $\{\Phi^{\alpha}\}$ are Poincar\'e-dual basis of $K^0(X)$ with respect to $( \cdot, \cdot )^{K}$. 

\end{definition}

\subsection{$J^K$-function as a graph sum via Kawasaki's HRR}
Let $I\mathcal{M} = \sqcup_i \mathcal{M}_i$ be the inertia stack of $\mathcal{M}$, with $\mathcal{M}_i$ connected components. Following Givental, we refer to them as \emph{Kawasaki strata}. Kawasaki's formula \cite{Kawasaki_1979} reads
\[
\chi(\mathcal{M}, E) = \sum_{i} \frac{1}{m_i} \int_{\mathcal{M}_i} \td(T_{\mathcal{M}_i}) \ch \Big(  \frac{ \Tr (E|_{\mathcal{M}_i}) }{ \Tr ( \Lambda^* N_{\mathcal{M}_i}^* ) } \Big),
\]
where the sum over $i$ runs through all connected components.

Applying the virtual Kawasaki's Hirzebruch--Riemann--Roch formula for Deligne--Mumford stacks (VKHRR) \cite{Tonita_VKHRR} on $\M_{0,n}(X,\beta)$, we get  
\[  \begin{split}
J^{\KK}(\mathbf{t}) &=(1-q) + \mathbf{t}(q) + \sum_{n,\beta,\alpha} \frac{Q^{\beta} \Phi^{\alpha}}{n!}\langle \frac{\Phi_{\alpha}}{1-qL}, \mathbf{t}(L),\dots, \mathbf{t}(L)  \rangle^{X,K}_{0,n+1,\beta}
\\
&= (1-q) + \mathbf{t}(q) + \sum_{n,\beta,\alpha} \frac{Q^{\beta} \Phi^{\alpha}}{n!}\sum_{\zeta} \langle \frac{\Phi_{\alpha}}{1-qL}, \mathbf{t}(L),\dots,\mathbf{t}(L)  \rangle^{X_{\zeta}}_{0,n+1,\beta},
\end{split}\]
where in $\sum_{\zeta}$ $\zeta$ runs through all roots of unity (including 1), $X_{\zeta}$ stand for the collection of inertia stacks (``Kawasaki strata'') where $g$ acts on $L_1$ with eigenvalue $\zeta$, and $\langle ... \rangle^{X_{\zeta}}$ denote the contributions of $X_{\zeta}$ in the Riemann--Roch formula. In other words, $\langle ...\rangle^{X,K}$ represent (true) quantum $K$-invariants while $\langle ... \rangle^{X_{\zeta}}$ stand for (collections of) \emph{cohomological} invariants. Symbolically, we have 
\[
\langle \cdots \rangle^{X,K} = \sum_{\zeta} \langle \cdots\rangle^{X_{\zeta}}.
\]
Let $\zeta$ be a primitive $r$-th roots of unity. Define
\[
\begin{split}
    {\rm arm}(q) &:= \sum_{n,\beta \neq 0,\alpha} \frac{Q^{\beta} \Phi^{\alpha}}{n!}\sum_{\zeta' \neq 1} \langle \frac{\Phi_{\alpha}}{1-qL}, \mathbf{t}(L),\dots,\mathbf{t}(L)  \rangle^{X_{\zeta'}}_{0,n+1,\beta}\in \mathbf{K}[\![ 1-q ]\!]; 
    \\
    {\rm leg}_{r}(q) &:= \Psi^r ({\rm arm}(q)|_{\mathbf{t}=0} )\in \mathbf{K}[\![ 1-q ]\!];
    \\
    {\rm tail}_{\zeta}(q) &:= \sum_{n,\beta \neq 0,\alpha} \frac{Q^{\beta} \Phi^{\alpha}}{n!}\sum_{\zeta' \neq \zeta} \langle \frac{\Phi_{\alpha}}{1-qL}, \mathbf{t}(L),\dots,\mathbf{t}(L)  \rangle^{X_{\zeta'}}_{0,n+1,\beta}\in \mathbf{K}[\![ 1- \zeta q ]\!],
\end{split}
\]
where $\zeta'$ in the above sums are arbitrary roots of unity and
$\Psi^r$ are the \emph{Adams operations}. Recall that Adams operations are additive and multiplicative endomorphisms of $K$-theory or more generally $\lambda$-rings, acting on line bundles by $\Psi^r(L) = L^{\otimes r}$. Here $\Psi^r$ also act on the Novikov variables by $\Psi^r(Q^{\beta}) = Q^{r\beta}$ and on $q$ by $\Psi^r(q) = q^r$. 

We recall the following two propositions which relate contributions from Kawasaki strata into fake $K$-theory and stem theory respectively.

The fake quantum $K$-theory, defined by ``naively'' applying the virtual Riemann--Roch for schemes to stacks
\[
\langle \tau_{d_1}(\Gamma_1)\dots \tau_{d_n}(\Gamma_n) \rangle^{X, \fake}_{g,n,\beta} := \int_{[\M_{g,n}(X,\beta)]^{\vir} } \td(T_{\M_{g,n}(X,\beta)}^{\vir}) \ch ( \otimes_{i=1}^n \ev_i^*(\Gamma_i) L_i^{d_i} ),
\]
where $[\M_{g,n}(X,\beta)]^{\vir}$ is the (cohomological) virtual fundamental class and $T^{\vir}_{\M_{g,n}(X,\beta)}$ is the virtual tangent bundle.
\begin{proposition}[{\cite[Proposition~1]{Givental_Tonita_2011}}] \label{prop_adelic_fake}
\[
    J^{K}(\mathbf{t})|_{q=1} = J^{\fake}(\mathbf{t} + {\rm arm}),
\]
where $()|_{q=1}$ is the Laurent series expansion at $q=1$ (of the rational function).
\end{proposition}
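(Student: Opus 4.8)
The plan is to run the virtual Kawasaki Hirzebruch--Riemann--Roch formula on $\M_{0,n+1}(X,\beta)$ as in the graph-sum expression above, and then reorganize the sum over Kawasaki strata according to the component carrying the first marked point. First split
\[
J^{K}(\mathbf{t}) = (1-q) + \mathbf{t}(q) + \sum_{\zeta\neq 1}\big\langle\cdots\big\rangle^{X_{\zeta}} + \sum_{\zeta = 1}\big\langle\cdots\big\rangle^{X_{\zeta}}.
\]
The middle sum is exactly $\arm(q)$ by definition; the only point to check is that the strata with $\beta = 0$ and $\zeta\neq 1$ contribute nothing, which holds because a stable genus-zero $(n+1)$-pointed curve has no nontrivial automorphisms, so $\M_{0,n+1}(X,0)$ carries no nontrivial inertia. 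Thus everything reduces to identifying the last sum, over the strata $X_{1}$, after Laurent expansion at $q=1$.

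On a stratum in $X_{1}$ the generic automorphism $g$ acts trivially on the cotangent line $L_{1}$, hence --- being of finite order on a smooth rational component and fixing the first marked point with trivial derivative --- trivially on the whole component $C_{0}$ of the source curve that carries the first marked point. I would then present the domain as $C_{0}$ with several \emph{arms} attached, an arm being a connected subcurve meeting $C_{0}$ in a single node and carrying a nontrivial $g$-action on the component adjacent to that node; since $g$ fixes $C_{0}$ pointwise it cannot permute the arms, and each arm-node is $g$-fixed. Using the normalization exact sequence for the source curve together with the multiplicativity of $\td$ and of $\ch$, and the fact that the virtual normal bundle of the stratum is the $g$-moving part of the virtual tangent bundle --- consisting of the smoothing modes of the arm-nodes together with the $g$-moving curve- and map-deformations internal to the arms --- I would factor the stratum contribution $\frac{1}{m_{i}}\int_{\M_{i}}\td(T_{\M_{i}})\ch\!\big(\Tr(E)/\Tr(\Lambda^{*}N^{*})\big)$ as a ``$C_{0}$-factor'' times one factor per arm.

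The $C_{0}$-factor is a genuine fake invariant of $\M_{0,m+1}(X,\beta_{0})$: the first marked point carries $\Phi_{\alpha}/(1-qL_{1})$ with $L_{1}$ now the \emph{honest} cotangent line (as $g$ is trivial on $C_{0}$), which is the fake descendant one Laurent-expands at $q=1$; the original marked points lying on $C_{0}$ carry $\mathbf{t}(L)$; and each arm-node carries the gluing class $\sum_{\alpha}\Phi_{\alpha}\otimes\Phi^{\alpha}$. Resumming an arm over all of its discrete data --- degree $\ne 0$, number of marked points, and the root of unity $\ne 1$ describing the $g$-action near its attaching node --- the smoothing-mode factor of that node inside $1/\Tr(\Lambda^{*}N^{*})$ glues the arm's cotangent line to the one on $C_{0}$, so that the arm collapses to the series $\arm$ with its variable $q$ specialized to the honest cotangent line of that node. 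Finally, distributing the remaining $n$ marked points between $C_{0}$ and the arms and summing over the number of arms --- which sit at unordered extra points of $C_{0}$, each decorated by $\arm(L)$ --- turns the summed $C_{0}$-factor into $J^{\fake}(\mathbf{t}+\arm) - (1-q) - (\mathbf{t}+\arm)(q)$, and adding back the $(1-q)$-, $\mathbf{t}(q)$- and $\arm(q)$-terms already split off gives $J^{K}(\mathbf{t})|_{q=1} = J^{\fake}(\mathbf{t}+\arm)$.

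The main obstacle is exactly this last bookkeeping: one must verify that the arm factor at a node, after the Kawasaki $1/\Tr(\Lambda^{*}N^{*})$ correction and the resummation of the descendant tower, is \emph{literally} $\arm(q)$ with $q$ specialized to the node's cotangent line, and --- crucially --- with \emph{no} Adams operation, in contrast to the root-of-unity case where rigidifying the order-$r$ automorphism of the distinguished component produces the $\Psi^{r}$ appearing in $\leg_{r}$; here the distinguished component $C_{0}$ has no automorphism, so no quotient and no Adams twist intervene. One must also note that the identity genuinely requires the Laurent expansion at $q=1$: since $\arm(q)\in\mathbf{K}[\![1-q]\!]$, the shift $\mathbf{t}+\arm$ is a legitimate input to $J^{\fake}$ and $\arm(L)$ makes sense as a finite sum ($1-L$ being nilpotent on each $\M_{i}$), whereas no equality of rational functions of $q$ is being asserted. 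Everything else is the standard virtual-Kawasaki graph-sum manipulation.
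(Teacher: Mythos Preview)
The paper does not prove this proposition: it is quoted verbatim from \cite[Proposition~1]{Givental_Tonita_2011} and used as a black box, so there is no proof in the paper to compare against. Your sketch is a faithful outline of the Givental--Tonita argument itself --- splitting the Kawasaki strata by the eigenvalue of $g$ on $L_{1}$, identifying the $\zeta\neq 1$ part with $\arm(q)$, and factoring the $\zeta=1$ strata along the distinguished component $C_{0}$ with arm-subtrees reattached as $\arm(L)$-insertions --- and the caveats you flag (no Adams twist because $g$ is trivial on $C_{0}$; the identity holds only after Laurent expansion at $q=1$) are exactly the right ones.
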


For $\zeta\neq 1$, a primitive $r$-th roots of unity, we consider the \emph{stem space}, which is isomorphic to
\[
 \M_{0,n+2,\beta}^X(\zeta) := \M_{0,n+2} \left( [X/\mathbb{Z}_r], \beta; (g,1,\dots,1,g^{-1}) \right).
\]
Here the group elements $g, 1, g^{-1}$ signal the twisted sectors in which the marked points lie. The notation $[...]^{X_{\zeta}}$ will be reserved for stem contributions
\[ 
\begin{split}
    &\Big[ T_1(L), T(L),\dots, T(L), T_{n+2}(L) \Big]^{X_{\zeta}}_{0,n+2,\beta}
    \\
 := &\int_{[\M_{0,n+2,\beta}^X(\zeta)]^{\vir}} \td( T_{\M} ) \ch \left( 
    \frac{ \ev_1^* (T_1(L)) \ev_{n+2}^* (T_{n+2}(L)) \prod_{i=2}^{n+1} \ev_i^* T(L) }
    {\Tr (\Lambda^* N^*_{\M})}  \right),
\end{split}
\]
where $[\M_{0,n+2,\beta}^X(\zeta)]^{\vir}$ is the virtual fundamental class, $T_{\M}$ is the (virtual) tangent bundle to $\M_{0,n+2,\beta}^X(\zeta)$, and $N_{\M}$ is the (virtual) normal bundle of $\M_{0,n+2,\beta}^X(\zeta) \subset \M_{0,nr+2}(X,r\beta)$. 

\begin{proposition}[{ \cite[\S~8]{Givental_Tonita_2011} }] \label{prop_adelic_stem}
Let $\zeta$ be a primitive $r$-th roots of unity.
We have
\[
\begin{split}
    & \sum_{n,\beta,\alpha}  \frac{Q^{\beta} \Phi^{\alpha}}{n!}\langle \frac{\Phi_{\alpha}}{1-qL}, \mathbf{t}(L),\dots,\mathbf{t}(L)  \rangle^{X_{\zeta}}_{0,n+1,\beta} 
    \\
   = & \sum_{n,\beta,\alpha} \frac{Q^{r\beta}\Phi^{\alpha}}{n!} \Big[ \frac{\Phi_{\alpha}}{1-q\zeta L^{1/r}}, {\rm leg}_{r}(L),\dots, {\rm leg}_{r}(L),  \delta_{\zeta}(L^{1/r}) \Big]^{X_{\zeta}}_{0,n+2,\beta},
\end{split}
\]
where
\[
\delta_{\zeta}(q) = (1- \zeta^{-1} q) + t(\zeta^{-1} q) + {\rm tail}_{\zeta}(\zeta^{-1} q).
\]
\end{proposition}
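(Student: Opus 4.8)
The plan is to carry out Givental--Tonita's analysis of the Kawasaki strata entering the virtual Kawasaki HRR expression for $J^K$, restricted to those on which the generic automorphism $g$ acts on the cotangent line $L_1$ at the first marked point by a fixed primitive $r$-th root of unity $\zeta$ (the $\zeta=1$ part being handled, in the guise of fake theory, by Proposition~\ref{prop_adelic_fake}); these strata furnish exactly the contributions $\langle\frac{\Phi_\alpha}{1-qL},\mathbf t(L),\dots,\mathbf t(L)\rangle^{X_\zeta}$ assembled on the left-hand side, and the goal is to rewrite their sum in terms of the stem theory of $[X/\mathbb Z_r]$. So the heart of the proof is a description of the $\mathbb Z_r$-equivariant geometry of a marked stable map $(C,f)$ carrying such an automorphism: $\langle g\rangle\cong\mathbb Z_r$ acts faithfully on $C$ and trivially on $X$, $f$ is equivariant, $g$ fixes every marked point, and $g$ acts on $L_1$ by $\zeta$.

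The first step is the geometric decomposition of $C$. Because $g$ acts on $L_1$ by a primitive $r$-th root, the component through $x_1$ is a $\mathbb P^1$ on which $g$ acts by a rotation of order $r$, with its two fixed points order-$r$ orbifold points; continuing through those fixed points that are nodes to further nontrivially-rotated components produces the \emph{stem}, a chain of rotated $\mathbb P^1$'s through $x_1$. The components attached to the stem along free $\mathbb Z_r$-orbits of nodes are the \emph{legs}; since a free orbit of a component contains no $g$-fixed point, legs carry no marked points, and each leg occurs in an orbit of $r$ mutually isomorphic, cyclically permuted copies. Attached at the far (``bud'') fixed point of the stem sits the remaining, $g$-invariant, part of $C$, which carries all the original $\mathbf t$-marked points and is itself recursively of the same shape. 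Forming $[C/\mathbb Z_r]$ turns the stem into a genus-zero twisted stable map to $[X/\mathbb Z_r]$ (trivial $\mathbb Z_r$-action on $X$) with two twisted marked points, in sectors $g$ and $g^{-1}$, at its ends, and $n$ untwisted marked points at the leg-orbit attachments --- an object of $\M_{0,n+2,\beta}^X(\zeta)$. Under this quotient $L_1$ descends to the orbifold cotangent line, whose $r$-th tensor power is the coarse cotangent line $L$; recording its $\mathbb Z_r$-character turns $\frac{\Phi_\alpha}{1-qL}$ into $\frac{\Phi_\alpha}{1-q\zeta L^{1/r}}$. I would then make precise the identification of each such Kawasaki stratum with (a $\mathbb Z_r$-gerbe over) a fibered product of $\M_{0,n+2,\beta}^X(\zeta)$ with leg moduli spaces over evaluation maps; since a rotated $\mathbb P^1$ covers its quotient with degree $r$ and legs occur $r$ times, the coarse degree and marked-point count of the ambient space of the stratum are correspondingly multiplied, producing the substitution $Q^\beta\mapsto Q^{r\beta}$ and the ambient $\M_{0,nr+2}(X,r\beta)$ in which $N_{\M}$ lives.

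Next I would run this through Kawasaki's formula, organizing the $\mathbb Z_r$-equivariant K-classes. The factor $\Tr(\Lambda^*N^*)$ splits into the node-smoothings internal to the stem, the deformations and obstructions along the legs together with the stem-to-leg node smoothings, and the factors permuting the $r$ legs in each orbit. The crucial point is that summing the Riemann--Roch contribution over an orbit of $r$ isomorphic legs, weighted by the Kawasaki factor $\frac{1}{r}$ and its $\mathbb Z_r$-trace, replaces the K-theoretic generating function of a single leg by its image under the Adams operation $\Psi^r$ --- the same power-structure mechanism that makes $\Psi^r$ additive and multiplicative and compatible with $\Psi^r(Q^\beta)=Q^{r\beta}$, $\Psi^r(q)=q^r$. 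Since a single leg (with its own internal structure) contributes a $\mathbf t$-free one-legged Kawasaki term, summing over all legs reproduces $\arm(q)|_{\mathbf t=0}$ at each attachment point, and the $\Psi^r$ upgrades this to the insertion $\leg_r=\Psi^r(\arm|_{\mathbf t=0})$. The $g$-invariant tail at the bud --- the single $\mathbf t$-point, or else a nontrivial $g$-invariant curve carrying the $\mathbf t$-points, captured by a Kawasaki contribution summing over the complementary eigenvalues $\zeta'\neq\zeta$ and re-expanded about $q=\zeta$, i.e.\ by $\tail_\zeta$ --- assembles, after the same $\zeta$-bookkeeping as at $x_1$, into the input $\delta_\zeta(L^{1/r})$. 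Matching the Poincar\'e-dual bases $\{\Phi_\alpha\}$, $\{\Phi^\alpha\}$, the $\frac{1}{n!}$ symmetry factors and the $\frac{1}{r}$ Kawasaki weights then gives the asserted identity.

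The step I expect to be the main obstacle is the virtual bookkeeping that legitimizes this picture: one must show that the virtual fundamental class of the Kawasaki stratum is compatibly pulled back from $[\M_{0,n+2,\beta}^X(\zeta)]^{\vir}$ and the virtual classes of the leg moduli --- equivalently, that the $\mathbb Z_r$-fixed part of the relative obstruction theory of $\M_{0,nr+2}(X,r\beta)$ along the stratum is the obstruction theory of twisted stable maps to $[X/\mathbb Z_r]$ (with leg factors), the moving part being $N_{\M}$ --- and that the stem/leg splitting is compatible with gluing at the level of virtual classes and $\mathbb Z_r$-structures. Getting the $\mathbb Z_r$-character on the cotangent line at every node of the stem exactly right is what pins down all the powers of $\zeta$ and the $L^{1/r}$, and tracking which of the original $\mathbf t$-inputs flow into the explicit $t$ of $\delta_\zeta$ and which into $\tail_\zeta$ is part of the same accounting. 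A secondary subtlety is that the stem is in general a chain of several rotated $\mathbb P^1$'s rather than one, and the tail at the bud can itself be nested; the identification, and in particular the appearance of $\delta_\zeta$ as the ``input'' at the $g^{-1}$-end, should therefore be organized by an induction that peels off the outermost rotated component (or leg-orbit) and reproduces an input of the same shape, paralleling the genus-zero recursion that defines $J^K$ itself.
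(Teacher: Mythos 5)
You should note at the outset that the paper offers no proof of this statement to compare against: Proposition~\ref{prop_adelic_stem} is quoted verbatim from Givental--Tonita (\S 8) and is used as an imported ingredient, so your attempt can only be measured against the cited argument. Measured that way, your outline follows exactly the Givental--Tonita strategy: isolate the Kawasaki strata where the symmetry $g$ acts on $L_1$ by the primitive root $\zeta$, decompose the curve into the rotated stem through $x_1$, the legs attached along free $\mathbb{Z}_r$-orbits, and the $g$-invariant part at the far pole, pass to $[C/\mathbb{Z}_r]$ to land in $\M_{0,n+2}([X/\mathbb{Z}_r],\beta;(g,1,\dots,1,g^{-1}))$, and let the orbit of $r$ isomorphic legs generate the Adams operation so that the insertion at unramified markings becomes $\leg_r=\Psi^r(\arm|_{\mathbf t=0})$ while the far end assembles into $\delta_\zeta$. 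That is the right road map, and the geometric facts you use (marked points are fixed by automorphisms, hence lie off the legs; nodes of the stem are two-branch, so the $g$-trivial part attaches only at the far pole; $L_1$ descends to the $r$-th root $L^{1/r}$ twisted by $\zeta$) are all correct.

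However, as a proof it remains an outline, and the parts you defer are precisely the content of the proposition. The claim that ``summing the Riemann--Roch contribution over an orbit of $r$ isomorphic legs \dots replaces the generating function of a single leg by its image under $\Psi^r$'' is asserted, not derived; it requires the Lefschetz-type trace computation of the cyclic permutation on the K-classes (and on the smoothing parameters at the $r$ attaching nodes), and this is also where the identification $\Psi^r(Q^\beta)=Q^{r\beta}$, $\Psi^r(q)=q^r$ is actually forced rather than merely consistent. Likewise the compatibility of the $\mathbb{Z}_r$-fixed part of the obstruction theory with the stem-space virtual class, and the splitting of $\Tr(\Lambda^*N^*_{\M})$ into stem-node, leg, and permutation factors, are named as ``the main obstacle'' but not carried out. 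Finally, your case analysis at the far pole lists only two possibilities (a single $\mathbf t$-marked point, or a nontrivial $g$-invariant tail), omitting the third case of a smooth unmarked point, which is what produces the dilaton-shift term $(1-\zeta^{-1}q)$ in $\delta_\zeta$; without it the stated form of $\delta_\zeta$ is not accounted for. So: right approach, faithful to the cited source, but the decisive computations are still to be done.
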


\section{Proof of Theorem~\ref{Theorem}} \label{sec:proof}
In this section, we assume $X$ to be a semi-positive variety of dimension $m$. In section~\ref{Sec:3.1}, we recall basic facts about (twisted) Gromov-Witten invariants on $X$. In section \ref{sec:3.2}, we prove three formulas relating Gopakumar-Vafa type invariants to quantum $K$-invariants. Finally, in the last section, we show that the integrality of quantum $K$-invariants implies the integrality of Gopakumar-Vafa type invariants.
\subsection{Twisted Gromov-Witten theory} \label{Sec:3.1}
Let $\zeta$ be a primitive $r$-th roots of unity. In the stem theory, we consider the twisting class
\[
\td (T_{\M}) \ch \Big( \frac{1}{\Tr (\Lambda^* N^*_{\M})} \Big).
\]
Such twisting classes come from the deformation theory of the moduli of stable maps, $\M := \M^X_{0,n+2,\beta}(\zeta)$, and consist of three parts. See e.g., \cite[Section~8]{Givental_Tonita_2011}.
\begin{enumerate}
    \item[type $A$.] $\displaystyle \td(\pi_*^K \ev^*(TX))\prod_{k=1}^{r-1} \td_{\zeta^k} (\pi_*^K \ev^*(T_X \otimes \mathbb{C}_{\zeta^k})) $, where $\pi: \mathcal{C} \rightarrow \M$ and $\ev: \mathcal{C} \rightarrow X/ \mathbb{Z}_r$ form the universal (orbifold) stable map diagram. $\mathbb{C}_{\zeta^k}$ is the line bundle over $B\mathbb{Z}_r$ with $g$ acts by $\zeta^k$, and for any line bundle $l$, define the invertible multiplicative characteristic classes
    \[
    \td(l) := \frac{c_1(l)}{1-e^{-c_1(l)}}, \quad \td_{\lambda}(l) := \frac{1}{1-\lambda e^{-c_1(l) }}.
    \]
    \item[type $B$.] $\displaystyle \td(\pi_*^K(-L^{-1})) \prod_{k=1}^{r-1} \td_{\zeta^k} (\pi_*^K(-L^{-1} \otimes \ev^* (\mathbb{C}_{\zeta^k}) ))$, where $L=L_{n+3}$ is the universal cotangent line bundle of 
    \[
    \mathcal{C} \cong \M_{0,n+3}^{X/\mathbb{Z}_r ,\beta} (g, 1,\dots,1,g^{-1},1).
    \]
    \item[type $C$.] $\displaystyle \td^{\vee}(-\pi_*^K i_* \mathcal{O}_{Z_g}) \td^{\vee}(-\pi_*^K i_*\mathcal{O}_{Z_1}) \prod_{i=1}^{k-1}\td^{\vee}_{\zeta^k}(-\pi_*^K i_*\mathcal{O}_{Z_1})$, where $Z_1$ stands for unramified nodal locus, and $Z_g$ stands for ramified one with $i: Z \rightarrow \mathcal{C}$ the embedding of nodal locus. For any line bundle $l$,
    \[
     \td^{\vee}(l) = \frac{-c_1(l)}{1-e^{c_1(l)}}, \quad \td^{\vee}_{\lambda}(l) = \frac{1}{1-\lambda e^{c_1(l) }}.
    \]
\end{enumerate}
We have the following observations.
\begin{lemma} \label{lemma_rpower}
\[
\td (T_{\M}) \ch \Big( \frac{1}{\Tr (\Lambda^* N^*_{\M})} \Big) =: r^{-(n - K_X\cdot\beta)} + T_{0,n+2,\beta}(\zeta),
\]
where $T_{0,n+2,\beta}(\zeta)\in H^{>0}(\M^X_{0,n+2,\beta}(\zeta))$, with $0$, $n$, $\beta$, and $\zeta$ inherited from $\M = \M^X_{0,n+2,\beta}(\zeta)$.
\end{lemma}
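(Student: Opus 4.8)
The plan is to evaluate the twisting class $\td(T_{\M})\ch\bigl(1/\Tr(\Lambda^* N^*_{\M})\bigr)$ in degree zero, i.e.\ on the level of ranks, and to show that all the degree-zero contributions from the three pieces (types $A$, $B$, $C$) multiply to give exactly $r^{-(n-K_X\cdot\beta)}$, with everything of positive cohomological degree absorbed into $T_{0,n+2,\beta}(\zeta)$. Since $\td$ and $\ch$ of a bundle $V$ have degree-zero part $1$ and $\op{rk}V$ respectively, and the multiplicative classes $\td_{\lambda}(l) = 1/(1-\lambda e^{-c_1(l)})$ have degree-zero part $1/(1-\lambda)$, the entire computation reduces to: (i) computing the (virtual) ranks of the various pushforwards $\pi_*^K\ev^*(-)$ and $\pi_*^K i_*\O_Z$ appearing in types $A$, $B$, $C$; (ii) multiplying the scalar factors $1/(1-\zeta^k)$ coming from the $\td_{\zeta^k}$ and $\td^{\vee}_{\zeta^k}$ terms over $k=1,\dots,r-1$.

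First I would record the key product identity $\prod_{k=1}^{r-1}(1-\zeta^k) = r$ for $\zeta$ a primitive $r$-th root of unity, obtained from $\prod_{k=1}^{r-1}(x-\zeta^k) = 1+x+\cdots+x^{r-1}$ evaluated at $x=1$. Then I would compute the virtual ranks by Riemann--Roch on the orbifold curve fibers of $\pi$: the type $A$ piece contributes a factor of $r^{-(\text{something involving }\dim X\text{ and the }\ev^* TX\text{ rank})}$, the type $B$ piece (built from $-L^{-1}$) contributes a factor accounting for the $r$-fold cover structure of the domain, and the type $C$ nodal contributions from $Z_g$ (ramified) and $Z_1$ (unramified) contribute the remaining powers of $r^{-1}$. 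The bookkeeping is essentially the same rank count that appears in Givental--Tonita \cite[\S 8]{Givental_Tonita_2011} specialized to genus zero; the role of the semi-positivity / dimension hypothesis is that it guarantees the exponent $n-K_X\cdot\beta$ is the correct (nonnegative-behaving) one so that the leading term is a genuine power of $r^{-1}$. After collecting all scalar factors one finds they assemble into $r^{-(n-K_X\cdot\beta)}$, and all higher Chern-class terms are by construction in $H^{>0}$.

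The main obstacle I expect is the careful determination of the virtual ranks of the pushforwards, in particular getting the contribution of $\pi_*^K\ev^*(T_X)$ and of the cotangent-line twist $\pi_*^K(-L^{-1})$ right on the \emph{orbifold} universal curve, where the fibers are genus-zero twisted curves with $\Z_r$-stabilizers at two special points and the degree is $\beta$ (not $r\beta$, after passing to the quotient $[X/\Z_r]$). One must apply orbifold Riemann--Roch (Toën's formula, or the Kawasaki/age correction) to each fiber, track how the $\zeta^k$-eigenspace decomposition of $\ev^*T_X\otimes\C_{\zeta^k}$ and of the nodal sheaves $i_*\O_{Z_g}$, $i_*\O_{Z_1}$ distributes the rank among the $r$ characters, and confirm that the $K_X\cdot\beta$ term enters precisely through the degree of $\ev^*$ of the anticanonical divisor. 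Once the rank of each factor is pinned down, combining them with the cyclotomic identity above is routine; isolating the degree-zero part and naming the remainder $T_{0,n+2,\beta}(\zeta)$ then finishes the proof.
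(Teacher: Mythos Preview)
Your outline is correct in spirit but takes a more laborious route than the paper, and it contains one misattribution.

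The paper's proof is much shorter because it avoids the ``main obstacle'' you flag --- the detailed orbifold Riemann--Roch computation of each pushforward --- entirely. The paper argues as follows: (i) the $\td$ and $\td^{\vee}$ factors (tangent direction) have degree-zero part $1$, so only the $\td_{\zeta^k}$ and $\td^{\vee}_{\zeta^k}$ factors (normal direction) contribute to the constant; (ii) inspecting the explicit form of the type $A$, $B$, $C$ twists, for each $k=1,\dots,r-1$ the $\zeta^k$-eigenspace of the normal bundle is $\pi_*^K(E\otimes\ev^*\C_{\zeta^k})$ for a \emph{fixed} virtual bundle $E$ independent of $k$; (iii) the total virtual rank of $N_{\M}$ is $(r-1)(n-K_X\cdot\beta)$, read off simply by comparing the virtual dimensions of the Kawasaki stratum $\M^X_{0,n+2,\beta}(\zeta)$ and the ambient $\M_{0,nr+2}(X,r\beta)$. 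From (ii) and (iii) it follows that each eigenspace has rank $n-K_X\cdot\beta$, so the constant term is $\bigl(\prod_{k=1}^{r-1}(1-\zeta^k)\bigr)^{-(n-K_X\cdot\beta)}=r^{-(n-K_X\cdot\beta)}$ by your cyclotomic identity. Your plan of computing the rank of each pushforward via Toën/age corrections on orbifold fibers would eventually reproduce the same exponent, but the paper's global dimension count sidesteps that bookkeeping.

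One correction: you say the semi-positivity hypothesis enters this lemma to make the exponent ``nonnegative-behaving''. It does not --- the lemma and its proof hold for arbitrary $X$, and the exponent $n-K_X\cdot\beta$ may have either sign. Semi-positivity is used only later (Lemma~\ref{lemma_vanishing} and the dimension arguments in \S\ref{sec:3.2}), not here.
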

\begin{proof}
Note that $N_{\M}$ has virtual dimension $(r - 1)(n - K_X\cdot\beta)$ since $\M$ is considered as Kawasaki strata in $\M_{0,nr+2}(X, r\beta)$. 

Only the twisting classes on normal bundle give nontrivial constant. It is of the following form:
\[
\prod_{k=1}^{r-1} \td_{\zeta^k} \Big( \pi^K_*(  E \otimes \ev^* (\mathbb{C}_{\zeta^k}) )\Big),
\]
where $E$ is a virtual bundle of rank $n-K_X\cdot\beta$. The constant term is given by
\[
\Big( \prod_{k=1}^{r-1} \frac{1}{1-\zeta^k} \Big)^{n-K_X\cdot \beta} = r^{-(n-K_X\cdot\beta)}.
\]

\end{proof}

\begin{lemma} \label{lemma_vanishing}
For $\beta\in H_2(X,\mathbb{Z})_{>0}$, assume $\deg_{\mathbb{C}}\gamma_1 \geq m-K_X\cdot \beta -1$ . We have
\[
\langle \tau_{k_1}(\gamma_1),\dots, \tau_{k_n}(\gamma_n) \rangle_{g,n,\beta}^{\#} =0,
\]
where $\#$ denotes cohomological GW invariants with any twisting.

In $K$-theory, assume $\deg_{\mathbb{C}} \ch (\Gamma_1) \geq m - K_X\cdot \beta -1$ . We have
\[
    \langle \tau_{k_1}(\Gamma_1),\dots, \tau_{k_n}(\Gamma_n) \rangle_{g,n,\beta}^K =0.
\]
\end{lemma}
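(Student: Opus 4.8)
The plan is to prove both vanishing statements by a dimension count against the virtual fundamental class, exploiting the semi-positivity hypothesis $-K_X \cdot \beta \geq 0$, which forces the virtual dimension of $\M_{g,n,\beta}$ to be close to its ``expected'' size. Recall that for a smooth projective variety $X$ of dimension $m$, the virtual dimension of $\M_{g,n}(X,\beta)$ is
\[
\vdim \M_{g,n}(X,\beta) = (1-g)(m-3) + n - K_X \cdot \beta.
\]
In genus zero with the relevant count this becomes $m - 3 + n - K_X\cdot\beta$, but the argument is really about degrees of the classes being integrated, so I will phrase it uniformly. The key point is that an insertion $\tau_{k_1}(\gamma_1)\cdots$ contributes a class of complex codimension at least $\sum_i(\deg_{\mathbb C}\gamma_i + k_i)$, and the $\psi$-classes only raise the codimension further; the integral over $[\M_{g,n,\beta}]^{\vir}$ vanishes once this exceeds the virtual dimension.

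First I would handle the cohomological case. The integrand is $\prod_i \ev_i^*(\gamma_i)L_i^{k_i}$ (possibly multiplied by a twisting class of non-negative codimension, which only helps), pushed onto $[\M_{g,n,\beta}^\#]^{\vir}$. Its codimension is at least $\deg_{\mathbb C}\gamma_1 + \sum_{i\geq 2}\deg_{\mathbb C}\gamma_i + \sum_i k_i \geq \deg_{\mathbb C}\gamma_1$. Now, a subtlety: the moduli space appearing in the stem/twisted theory is $[X/\mathbb Z_r]$-valued, so one must check that the virtual dimension of the relevant twisted moduli space is no larger than $m - K_X\cdot\beta - 1$ plus the number of extra marked points' worth of slack; but by the remark in the introduction and by Lemma~\ref{lemma_rpower} the twisted sectors have \emph{smaller} virtual dimension than the untwisted one, so the untwisted bound suffices and gives the cleanest estimate. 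Concretely, I would argue: if $\deg_{\mathbb C}\gamma_1 \geq m - K_X\cdot\beta - 1$, then the total codimension of the integrand is at least $m - K_X\cdot\beta - 1 \geq \vdim$ minus the contribution of the remaining marked points; comparing with the dimension of $\ev_1$'s image and using that each further evaluation/forgetful factor can only cut dimension, the integral is forced to be zero by degree reasons. I would make this precise by forgetting all but the first marked point (or pushing forward along $\ev_1$ composed with the forgetful map to $\M_{g,1}(X,\beta)$) and observing the pushed-forward class sits in $H^{> 2\dim}$ of the target.

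For the $K$-theoretic statement, I would reduce to the cohomological one via the Hirzebruch--Riemann--Roch / Kawasaki framework already set up: $\QK_{g,n,\beta}(\tau_{k_1}(\Gamma_1),\dots)$ is expressed through $\ch$ of the insertions against Todd classes over the Kawasaki strata, and the leading Chern character term $\ch(\Gamma_1) = \mathrm{rk}(\Gamma_1) + \ch_{\geq 1}(\Gamma_1)$ contributes in each Chow/cohomological degree starting from $\deg_{\mathbb C}\ch(\Gamma_1)$; the hypothesis $\deg_{\mathbb C}\ch(\Gamma_1) \geq m - K_X\cdot\beta - 1$ then triggers exactly the cohomological vanishing of Lemma~\ref{lemma_vanishing} term by term (on the untwisted sector and, a fortiori, on every twisted sector by the dimension comparison). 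So the $K$-theoretic vanishing follows by applying the cohomological vanishing to each summand of the Kawasaki/VKHRR expansion.

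The main obstacle I anticipate is bookkeeping the dimension count \emph{on the twisted sectors}: one must be certain that inserting a high-degree class at the first marked point of $\M^X_{0,n+2,\beta}(\zeta)$ still forces vanishing, given that the stem moduli space has its own virtual tangent and normal bundles. The clean way around this is precisely Lemma~\ref{lemma_rpower}: it shows the twisting class $\td(T_\M)\ch(1/\Tr\Lambda^*N^*_\M)$ decomposes as a non-negative-degree constant $r^{-(n-K_X\cdot\beta)}$ plus strictly positive-degree terms, and $\vdim \M^X_{0,n+2,\beta}(\zeta) \leq m - K_X\cdot\beta - 1 + (\text{marked-point slack})$ by semi-positivity, so the same codimension estimate goes through verbatim. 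Once that is in place the proof is a routine, if slightly tedious, comparison of the codimension of the integrand with the virtual dimension.
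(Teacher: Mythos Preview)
Your core idea is correct and is exactly the paper's proof: push forward along the forgetful map $\pi_1 \colon \M_{g,n}(X,\beta) \to \M_{g,1}(X,\beta)$, pull $\ev_1^*\gamma_1$ out by the projection formula, and observe that $\deg_{\mathbb C}\gamma_1 \geq m - K_X\cdot\beta - 1 > \vdim \M_{0,1}(X,\beta) = m - 2 - K_X\cdot\beta$, so the integral over $[\M_{g,1}(X,\beta)]^{\vir}$ vanishes. The paper does this in two lines and says the same argument works in $K$-theory.

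However, most of what you wrap around this sentence is either unnecessary or slightly confused. Three points:

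\begin{itemize}
\item The direct codimension count on $\M_{g,n}(X,\beta)$ you start with cannot work: knowing only $\deg_{\mathbb C}\gamma_1$ gives the bound $\geq m - K_X\cdot\beta - 1$, which need not exceed $\vdim \M_{g,n}(X,\beta) = (1-g)(m-3) + n - K_X\cdot\beta$ once $n \geq 2$. You must pass to $\M_{g,1}$; the preliminary discussion is not an argument.
\item Semi-positivity plays \emph{no role} in this lemma. The inequality above holds for any sign of $K_X\cdot\beta$. Semi-positivity is used elsewhere in Section~3 to compare virtual dimensions of Kawasaki strata, but not here.
\item You have misread what ``$\#$'' means. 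The twisting in this lemma is simply multiplication by an arbitrary cohomology class $T \in H^*(\M_{g,n}(X,\beta))$, not the stem theory on $[X/\mathbb Z_r]$. The projection formula handles any such $T$ immediately, since $\ev_1^*\gamma_1 = \pi_1^*\ev_1^*\gamma_1$ factors out of $(\pi_1)_*$. Your entire discussion of twisted sectors, the appeal to Lemma~\ref{lemma_rpower}, and the ``main obstacle'' you anticipate are therefore irrelevant to proving this lemma.
\end{itemize}

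For the $K$-theoretic half, your route through the full VKHRR expansion is valid but roundabout; the paper intends the same projection-formula argument along $\pi_1$ in $K$-theory.
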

\begin{proof}
For the cohomological case, let
\[
 \pi_1 : \M_{g,n}(X, \beta) \to \M_{g,1} (X, \beta)
\]
be the forgetful map forgetting the last $n-1$ marked points and $T\in H^*(\M_{g,n}(X,\beta))$ be the twisting class.
By projection formula
\[
\begin{split}
\int_{[\M_{g,n}(X, \beta)]^{\vir}} T \ &\prod_{i=1}^{n} \Big(\psi_i^{k_i} \ev_i^*\gamma_i\Big) 
\\
&= \int_{[\M_{g,1}(X, \beta)]^{\vir}} (\ev_1^* \gamma_1 ) \ (\pi_1)_*\left( T \ \psi_1^{k_1}\prod_{i=2}^{n} \psi_i^{k_i} \ev_i^*\gamma_i \right) .
\end{split}
\] 
The last equation vanishes for dimension reason. The same argument works in $K$-theory.
\end{proof}
From Lemma~\ref{lemma_vanishing} and the definition of $\arm$, $\leg$, and $\tail$, we have
\begin{corollary} \label{corollary_arm_leg_tail}
\[
\begin{split}
    & \ch (\arm (q)) \in H^{\geq 2(2 + K_X\cdot\beta)}(X)[\![ 1-q,Q ]\!];
    \\
    & \ch (\leg_r (q)) \in H^{\geq 2(2 + K_X\cdot\beta)}(X)[\![ 1-q,Q ]\!];
    \\
    & \ch (\tail_{\zeta}(q)) \in H^{\geq 2(2 + K_X\cdot\beta)}(X)[\![ 1-\zeta q,Q ]\!].
\end{split}
\]
\end{corollary}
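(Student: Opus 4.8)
The plan is to trace the homological degree through the definitions of $\arm$, $\leg_r$, and $\tail_\zeta$, using Lemma~\ref{lemma_vanishing} as the input that forces every nonzero contribution to be supported in high cohomological degree. I would start with $\arm(q)$. By definition, $\arm(q)$ is a sum of terms of the form $\Phi^\alpha \langle \frac{\Phi_\alpha}{1-qL}, \mathbf t(L),\dots,\mathbf t(L)\rangle^{X_{\zeta'}}_{0,n+1,\beta}$ with $\beta\neq 0$ and $\zeta'\neq 1$. Each such invariant is, by the VKHRR interpretation, a cohomological (twisted) Gromov-Witten integral; expanding $\frac{1}{1-qL}$ as a power series in $(1-q)$ puts the descendant insertion $\frac{\Phi_\alpha}{1-qL}$ in the shape $\sum \tau_{d}(\Phi_\alpha)(1-q)^{\text{something}}$ with coefficients valued in such integrals. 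The coefficient of $\Phi^\alpha$ is then a number-valued pairing $\langle \Phi^\alpha, -\rangle^K$ of a fixed class against a twisted descendant invariant with first insertion $\Phi_\alpha$. The key point is that Lemma~\ref{lemma_vanishing} says this vanishes whenever $\deg_{\mathbb C}\ch(\Phi_\alpha)\geq m - K_X\cdot\beta - 1$, equivalently $\deg_{\mathbb C}\ch(\Phi_\alpha) > m - K_X\cdot\beta - 2 = m - (2 + K_X\cdot\beta)$. Dually, via the $K$-theoretic Poincaré pairing $(a,b)^K = \int_X \td(T_X)\ch(a)\ch(b)$, the surviving part of $\sum_\alpha \Phi^\alpha(\cdots)\Phi_\alpha$ picks out exactly the complementary degrees: $\ch$ of the output lies in $H^{\geq 2(2 + K_X\cdot\beta)}(X)$. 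Keeping track of the $Q$- and $(1-q)$-adic structure then gives $\ch(\arm(q))\in H^{\geq 2(2+K_X\cdot\beta)}(X)[\![1-q,Q]\!]$, where I should be careful that $K_X\cdot\beta$ varies with the $Q$-degree, so the statement is really a filtration statement matching powers of $Q$ to lower bounds on cohomological degree.

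For $\leg_r(q) = \Psi^r(\arm(q)|_{\mathbf t=0})$, I would invoke the standard fact that the Adams operation $\Psi^r$ acts on $H^{2k}(X)$ (via $\ch$) by multiplication by $r^k$, hence preserves the cohomological filtration; restricting $\mathbf t=0$ only throws terms away, so the bound for $\arm$ passes to $\leg_r$. The recalibration $\Psi^r(Q^\beta)=Q^{r\beta}$ and $\Psi^r(q)=q^r$ means the $Q$- and $(1-q)$-adic bookkeeping transports correctly; in particular $1-q^r$ is divisible by $1-q$, so $\leg_r(q)\in \mathbf K[\![1-q]\!]$ as claimed in its definition, and the cohomological degree lower bound is unchanged. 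For $\tail_\zeta(q)$, the argument is the same as for $\arm(q)$: it is again a sum over $\beta\neq 0$, $\zeta'\neq\zeta$ of the same type of twisted descendant invariants, so Lemma~\ref{lemma_vanishing} applies verbatim to give $\ch(\tail_\zeta(q))\in H^{\geq 2(2+K_X\cdot\beta)}(X)[\![1-\zeta q,Q]\!]$; the only bookkeeping change is that $\tail_\zeta$ is expanded at $q = \zeta^{-1}$ rather than $q=1$, which is why the coefficient ring is $\mathbf K[\![1-\zeta q]\!]$, but this is irrelevant to the degree estimate.

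The main obstacle I expect is not conceptual but bookkeeping: one must be precise that "$\ch$ of $\arm$ lies in $H^{\geq 2(2+K_X\cdot\beta)}$" is shorthand for a statement graded by the Novikov variable — for each monomial $Q^\beta$ (and each power of $1-q$) the corresponding coefficient in $K^0(X)\otimes\mathbb Q$ has Chern character supported in $H^{\geq 2(2+K_X\cdot\beta)}(X)$. The subtlety is that as $\beta$ grows, $-K_X\cdot\beta$ grows (semi-positivity gives $K_X\cdot\beta\leq 0$), so the lower bound $2(2+K_X\cdot\beta)$ actually \emph{decreases} with $\beta$; thus the estimate is most restrictive for small $\beta$, and for $\beta$ with $-K_X\cdot\beta\geq 2$ it gives nothing (consistent with the $\GV=\GW$ case). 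A second minor point to check is the passage from "the pairing $\langle\Phi^\alpha,-\rangle$ against a descendant with first entry $\Phi_\alpha$ vanishes in high degree" to "the class $\sum_\alpha\Phi^\alpha\langle\Phi_\alpha,-\rangle$ is supported in complementary degree," which uses that $\{\Phi_\alpha\},\{\Phi^\alpha\}$ are $(\cdot,\cdot)^K$-dual and that the pairing is, up to the invertible factor $\td(T_X)$, the ordinary Poincaré pairing. Once these are set up, the corollary is immediate from Lemma~\ref{lemma_vanishing} and the definitions.
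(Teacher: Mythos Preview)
Your proposal is correct and follows the same approach as the paper, which simply asserts the corollary as an immediate consequence of Lemma~\ref{lemma_vanishing} together with the definitions of $\arm$, $\leg_r$, and $\tail_\zeta$; your write-up is a careful unpacking of exactly that reasoning, including the duality step through $(\cdot,\cdot)^K = \int_X \td(T_X)\ch(\cdot)\ch(\cdot)$ and the observation that $\Psi^r$ preserves the cohomological filtration. The one place worth tightening is your identification of each $\langle\cdots\rangle^{X_{\zeta'}}$ as a ``twisted GW integral'' in the literal sense of Lemma~\ref{lemma_vanishing}: these are integrals over Kawasaki strata rather than over $\M_{0,n+1}(X,\beta)$ itself, so to apply the lemma you either note that the forgetful map $\pi_1$ induces a compatible map on inertia stacks (and the virtual dimension of each stratum is bounded above by $\vdim\M_{0,1}(X,\beta)$ under semi-positivity), or, for $\arm$, observe that the $K$-theoretic part of Lemma~\ref{lemma_vanishing} kills the full sum $\sum_\zeta$ while the cohomological part kills the $\zeta=1$ (fake) piece, hence the remainder $\sum_{\zeta'\neq 1}$ vanishes as well.
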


\subsection{Relations between GV and QK}\label{sec:3.2}
In this subsection, we prove three formulas relating Gopakumar-Vafa type invariants to quantum $K$-invariants.
\begin{proposition} \label{prop_QK=GV_1pt} Let $\gamma \in H^{2(m-K_X\cdot \beta-2)}(X)$ and $\Gamma:= \ch^{-1} (\gamma) \in K^0(X)_{\mathbb{Q}}$. We have
\[
    \GV_{\beta}(\gamma) = \QK_{\beta}(\Gamma) + \delta_{K_X\cdot \beta,0} \sum_{\substack{r|\ind(\beta) \\ r\neq 1}}  \mu(r) r \ \QK_{\beta/r} (\Gamma),
\]
where $\ind(\beta):=\max\{ k\in \mathbb{N} | \beta/k \in H_2(X,\mathbb{Z}) \}$ and $\mu$ is the M\"obius function.
\end{proposition}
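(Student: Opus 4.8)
The plan is to trace the definition of the big $J$-function through the graph-sum decomposition provided by VKHRR, extract the one-point genus zero invariant with insertion $\Gamma$ in the appropriate cohomological degree, and identify each piece. Writing $\QK_\beta(\Gamma)$ as the coefficient of $Q^\beta$ in $\langle \Gamma \rangle^{X,K}_{0,1,\beta}$ (after pairing against $1$, since the insertion is one-point and we truncate $\mathbf t$), we use the splitting $\langle\cdots\rangle^{X,K}=\sum_\zeta \langle\cdots\rangle^{X_\zeta}$ of the excerpt. The term $\zeta=1$ contributes, via Proposition~\ref{prop_adelic_fake}, the fake invariant $J^{\fake}(\mathbf t+\arm)$; setting $\mathbf t=0$ and using Corollary~\ref{corollary_arm_leg_tail} together with Lemma~\ref{lemma_vanishing}, the $\arm$ insertions are forced into cohomological degree $\geq 2(2+K_X\cdot\beta)$, so for a single extra insertion $\Gamma$ with $\deg_\C \ch(\Gamma)=m-K_X\cdot\beta-2$ the dimension count shows all terms with an $\arm$-insertion vanish; hence the $\zeta=1$ contribution is exactly the fake one-point invariant $\langle\Gamma\rangle^{\fake}_{0,1,\beta}$. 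By the virtual Riemann--Roch this fake invariant with the top-degree insertion $\gamma=\ch(\Gamma)$ equals the cohomological Gromov--Witten invariant $\GW_\beta(\gamma)$, because $\td(T^{\vir})\ch(\ev_1^*\Gamma)$ in the relevant degree reduces to $\ev_1^*\gamma$ paired against the virtual class; this is where I will need the semi-positivity assumption and the precise virtual dimension $\vdim \M_{0,1}(X,\beta)=m-K_X\cdot\beta-2$.

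Next I would handle the twisted sectors $\zeta\neq 1$. By Proposition~\ref{prop_adelic_stem}, the $\zeta$-contribution (for $\zeta$ a primitive $r$-th root of unity) to the one-point invariant is a stem invariant on $\M^X_{0,n+2,\beta}(\zeta)$ with $\leg_r$ and $\delta_\zeta$ insertions. Again invoke Corollary~\ref{corollary_arm_leg_tail}: $\leg_r$ and the $\tail_\zeta$ part of $\delta_\zeta$ live in cohomological degree $\geq 2(2+K_X\cdot\beta)$, while the ``constant'' part of $\delta_\zeta$ is $1-\zeta^{-1}q$. By Lemma~\ref{lemma_rpower}, the twisting class on a stem stratum expands as $r^{-(n-K_X\cdot\beta)}$ plus something in $H^{>0}$. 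A dimension count on the stem space — whose virtual dimension I expect to match that of $\M_{0,n+2}(X,\beta)$ up to the semi-positive correction — shows that almost all terms vanish: the only surviving configuration is $n=0$, no $\leg$ insertions, $\delta_\zeta$ replaced by its constant term, and the twisting class replaced by its constant $r^{K_X\cdot\beta}=r^0=1$ (using $K_X\cdot\beta=0$, which is exactly why the Kronecker delta $\delta_{K_X\cdot\beta,0}$ appears). In that degenerate case the stem space $\M^X_{0,2,\beta}(\zeta)$ is built from $\M_{0,\bullet}(X,\beta')$ for $\beta=r\beta'$ via the balanced-chain/contraction description, and its contribution reduces to a multiple of $\QK_{\beta/r}(\Gamma)$; the numerical factor, after summing over all $r$ dividing $\ind(\beta)$ and over the primitive $r$-th roots of unity (a factor $\varphi(r)$ or similar), should assemble into $\mu(r)\,r$ by a standard Möbius-inversion identity — this cancellation is the technical heart.

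To finish, I would combine: $\zeta=1$ gives $\GW_\beta(\gamma)$; the $\zeta\neq 1$ sectors give $\delta_{K_X\cdot\beta,0}\sum_{r\mid\ind(\beta),\,r\neq1}\mu(r)\,r\,\QK_{\beta/r}(\Gamma)$ (after the one-point stem contributions are, inductively or directly, re-expressed as true quantum $K$-invariants in degree $\beta/r$); and by the definition of $\GV_\beta$ recalled in the introduction — in the $K_X\cdot\beta=0$ case $\GW_\beta(\gamma)=\sum_{r}\GV_{\beta/r}(\gamma)/r^{2}$ for a one-point insertion ($n=1$, so $3-n=2$), and in the $K_X\cdot\beta>0$ case $\GV_\beta=\GW_\beta$ — the claimed formula is exactly the $K$-theoretic refinement of Möbius inversion of the Klemm--Pandharipande relation. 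I expect the main obstacle to be bookkeeping the combinatorial factors in the stem contribution: matching the automorphism factor $1/m_i$ in Kawasaki's formula, the $Q^{r\beta}$ versus $Q^\beta$ reindexing, the $r$ from $\Psi^r$ acting on Novikov variables, and the count of primitive roots of unity, so that the net coefficient of $\QK_{\beta/r}(\Gamma)$ is precisely $\mu(r)\,r$ and higher divisibility ($r^2\mid$ something) does not spoil integrality. Verifying that only $r\mid\ind(\beta)$ contribute (so $\beta/r$ is an honest effective class) and that no $H^{>0}$ twisting terms sneak back in under the weakened degree bound will require the semi-positivity hypothesis in an essential way.
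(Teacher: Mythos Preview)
Your overall architecture (VKHRR, split into $\zeta=1$ and $\zeta\neq 1$, dimension counts using Lemma~\ref{lemma_rpower}, Lemma~\ref{lemma_vanishing}, Corollary~\ref{corollary_arm_leg_tail}) is exactly the paper's, and your treatment of the $\zeta=1$ sector is correct. The gap is in the $\zeta\neq 1$ sectors, where your dimension argument is too aggressive and the bookkeeping at the end does not close.

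Concretely, two terms you discard are in fact nonzero. First, within the two-point stem invariant $[\Gamma,\delta_\zeta(L^{1/r})]^{X_\zeta}_{0,2,\beta/r}$, the combination ``constant of $\delta_\zeta$ times constant of the twisting class'' actually \emph{vanishes} by dimension when $K_X\cdot\beta=0$ (the stem space has $\vdim=m-1$ while $\gamma$ has degree $m-2$); what survives is (a) the linear $\psi$-term of $\delta_\zeta(L^{1/r})=1-\zeta^{-1}(1+\psi/r+\cdots)$ against the untwisted class, producing $-\zeta^{-1}\langle\gamma,\psi/r\rangle^{X/\Z_r}_{0,2,\beta/r}$ and hence a factor $\mu(r)/r^{2}$ after the dilaton equation and the root-of-unity sum, and (b) the type~$C$ nodal twisting against the constant of $\delta_\zeta$, producing a $(\phi(r)-\mu(r))\GW_{\beta/r}(\gamma)$ term. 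Second, and more seriously, your claim ``no $\leg$ insertions'' is wrong: the stem-contracts configuration $[\Gamma,\leg_r(L)|_{q=1},\delta_\zeta]^{X_\zeta}_{0,3,0}$ on $X$ itself has $\vdim=m$, and since $\ch(\leg_r)$ starts in degree~$4$ while $\gamma$ has degree $2(m-2)$, the top piece survives. This ``type~3'' term is essential; via Remark~\ref{rmk} it contributes $(\phi(r)-\mu(r))\big(\sum_{k}\GV_{\beta/(rk)}(\gamma)-\GW_{\beta/r}(\gamma)\big)$.

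Because of this, the stem sectors do \emph{not} produce $\mu(r)\,r\,\QK_{\beta/r}(\Gamma)$ directly. What the paper obtains, after summing all three types and using $\sum_{k\mid r}\phi(k)=r$ and $\sum_{k\mid r}\mu(k)=0$, is the relation
\[
\QK_\beta(\Gamma)=\GV_\beta(\gamma)+\delta_{K_X\cdot\beta,0}\sum_{\substack{r\mid\ind(\beta)\\ r\neq 1}} r\,\GV_{\beta/r}(\gamma),
\]
and only \emph{then} is M\"obius inversion applied to this identity (not to the Klemm--Pandharipande relation) to yield the stated formula. In your outline the M\"obius function is misplaced: it does not emerge from the root-of-unity sum on the stem, but from inverting the final $\QK$--$\GV$ relation.
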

\begin{proof}
We compute $\QK_{\beta}(\Gamma)$ using VKHRR and divide the graph sum of Kawasaki strata into three types, type 1, type 2, and type 3. 

 \begin{figure}[ht]
    \centering
    \begin{tikzpicture}[scale=2.6]
    \draw (0,0) .. controls (0.4,0.5) and (0.8,-0.5) .. (1.2,0);
    \filldraw[thick,-to](0,0)--(0,0.4);
    \filldraw[black] (-0.2,0.2) node[anchor=west]{\tiny $\Gamma$};

    \filldraw[ultra thick] (2,0) -- (3,0);
    \filldraw[thick,-to](2,0)--(2,0.4);
    \filldraw[black] (1.8,0.2) node[anchor=west]{\tiny $\Gamma$};
    \filldraw[black] (2.3,0.1) node[anchor=west]{\tiny ${\rm stem}$};
    \filldraw[black] (2.9,0.1) node[anchor=west]{\tiny $\delta_{\zeta}$};
    \filldraw[black] (3,0) circle(0.7 pt);
    
    \filldraw[ultra thick] (4,-0.2) -- (4,0.5);
    \filldraw[thick, -to] (4,0.5) -- (3.6,0.5);
    \filldraw[black] (3.7,0.6) node[anchor=west]{\tiny $\Gamma$};
    \draw (4, 0.3) .. controls (4.25, 0.4) and (4.5, 0.2) .. (4.7, 0.3);
    \filldraw[black] (4,-0.2) node[anchor=west]{\tiny $\delta_{\zeta}$};
    \filldraw[black] (4.4,0.35) node[anchor=west]{\tiny $\leg$};
    \filldraw[black] (4,-0.2) circle(0.7 pt);
    \end{tikzpicture}
    \caption{type 1 (left), type 2 (middle), type 3 (right)}
    \label{}
    \end{figure}
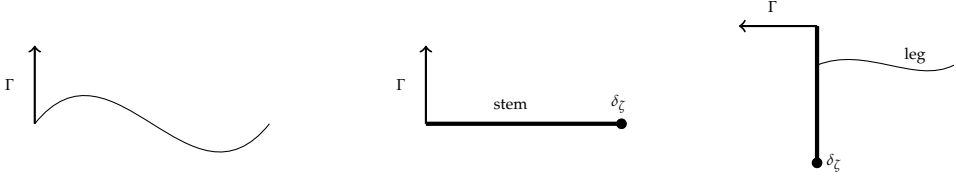

We explain the above graph as follows. See \cite{Chou_Lee_2022_QKGVI} or \cite{Givental_Tonita_2011} for more detail.
\begin{itemize}
    \item The arrow with $\Gamma$ denotes the first marked point.
    \item The stem curve is emphasized as thick line. 
    \item The vertical line denotes the curve that contracts to a point
    under the stable map.
    \item Type 1 graph denotes the Kawasaki strata of the fake theory. Type 2 graph denotes the Kawasaki strata of the stem theory with the stem curve does not contract. Type 3 graph denotes the Kawasaki strata of the stem theory with the stem curve contracts.
\end{itemize}

The type 1 contribution gives $\GW_{\beta}(\gamma)$ since $\gamma$ is of the top degree. The twisting class $\td(T_{\M})$ gives no contribution in the fake theory computation.

The type 2 contribution has two cases. If $-\beta\cdot K_X=0$, we have
\[
\begin{split}
    &\sum_{\substack{r|\ind(\beta) \\ r\neq 1}} \sum_{\zeta^r = 1} \Big[ \Gamma, \delta_{\zeta}(L^{1/r})  \Big]^{X_{\zeta}}_{0,2,\beta / r}
    \\
    &=\sum_{\substack{r|\ind(\beta) \\ r\neq 1}} \sum_{\zeta^r=1}\Bigg( -\zeta^{-1} \Big\langle \gamma, \frac{\psi}{r} \Big\rangle^{X/\mathbb{Z}_r,H}_{0,2,\beta/r} 
    \\
    & \hspace{35mm} + \frac{1}{r} \Big\langle \Gamma, \Psi^r \Big( \sum_{\alpha} \Phi^{\alpha} \langle \Phi_{\alpha} \rangle^{X,\fake}_{0,1,\beta/r} \Big), 1-\zeta^{-1}\Big\rangle^{X/\mathbb{Z}_r,K}_{0,3,0} \Bigg)
    \\
    &= \sum_{r|\ind(\beta)} \Bigg( \frac{\mu(r)}{r^2} \GW_{\beta /r} (\gamma) + \Big(\phi(r)-\mu(r) \Big) \GW_{\beta/r}(\gamma)  \Bigg),
\end{split}
\]
where $\sum_{\zeta^r=1}$ denotes the sum over primitive $r$-th roots of unity.
The first term after the first equality comes from the untwisted contribution. The second term comes from the twisting of type $C$ (type $A$ and $B$ gives no contribution).
In the last equality, we use the dilaton equation and the following formulas:
\[
\sum_{\zeta^r=1} \zeta = \sum_{\zeta^r=1} \zeta^{-1} = \mu(r), \qquad  \sum_{\zeta^r=1} 1 = \phi(r).
\]
If $-\beta\cdot K_X > 0$, there is no contribution for type 2 for dimensional reason. Note that for $r \geq 2$, we have 
$$\vdim \M_{0,n}(X, \beta/r) < \vdim \M_{0,n}(X,\beta).$$

For the same reason, only $-\beta \cdot K_X=0$ case will give nonzero contribution for type 3. For $r\geq 2$, we have
\[
\begin{split}
& \Coeff\Bigg( Q^{\beta} ; \sum_{\substack{r|\ind(\beta) \\ r\neq 1}} \sum_{\zeta^r = 1} \Big[ \Gamma , \leg_r(L)(q=1), \delta_{\zeta}(L^{1/r}) \Big]_{0,3,0}^{X,K} \Bigg) \\
& = \sum_{\substack{r|\ind(\beta) \\ r\neq 1}} \sum_{\zeta^r = 1} \frac{1}{r} \Big\langle \Gamma, \Coeff \Big( Q^\beta ; \leg_r(L)(q=1) \Big), 1- \zeta^{-1} \Big\rangle_{0,3,0}^{X/\mathbb{Z}_r, K}
\\
& = \Big(\phi(r)-\mu(r)\Big) \Big( \sum_{k| \frac{\ind{\beta}}{r} } \GV_{\beta/(rk)}(\gamma) - \GW_{\beta/r}(\gamma) \Big),
\end{split}
\]
We only consider the constant part of $\delta_{\zeta}$ for dimensional reason since $\ch(\leg(L))(q=1) \in H^{\geq 4}[\![Q]\!]$ by Corollary~\ref{corollary_arm_leg_tail}. 
The constant $r^{-1}$ comes from the twisting class, see Lemma~\ref{lemma_rpower}. In the second equality, we use the formula of $\leg_r(L)(q=1)$ discussed in Remark \ref{rmk}.

Combine all three contributions, we have
\[
\begin{split}
    \QK_{\beta}(\Gamma) &= \GW_{\beta}(\gamma) 
    \\
    &+ \delta_{K_X\cdot \beta, 0} \sum_{\substack{r|\ind(\beta) \\ r\neq 1}}\Bigg( \frac{\mu(r)}{r^2}  \GW_{\beta/r}(\gamma) + \Big(\phi(r)-\mu(r) \Big) \sum_{k| \frac{\ind \beta}{r}} \GV_{\beta/(rk)}(\gamma) \Bigg)
    \\ &= \GV_{\beta}(\gamma) + \delta_{K_X \cdot \beta,0} \sum_{\substack{ r| \ind(\beta) \\ r\neq 1}} r\ \GV_{\beta/r} (\gamma).
\end{split}
\]
For the second equality, we use the identities:
\[
    \sum_{k|r} \phi(k) = r, \qquad \sum_{k|r} \mu (k) = 0.
\]
The proposition follows from the M\"obius inverse formula.
\end{proof}

\begin{remark} \label{rmk} 
With the same notation as in Proposition~\ref{prop_QK=GV_1pt}, let $\check{\gamma}$ be the Poincar\'e dual of $\gamma$. We have
\[
\begin{split}
    \Coeff \Big( \check{\gamma} ; \ch(\leg_r (L))(q=1)  \Big) 
    & = r^2 \sum_{\beta} \Big( \sum_{k | \frac{\ind(\beta)}{r}}\GV_{\beta/(rk)} (\gamma) - \GW_{\beta/r}(\gamma) \Big)Q^{\beta}.
\end{split}
\]
It can be computed by induction on the Novikov variable $Q^{\beta}$. It is quite involved. See \cite[Section~4]{Chou_Lee_2022_QKGVI} for an explicit computation and \cite[Section~3]{Chou_Lee_2022_QKGVII} for the formula on Calabi-Yau threefold. For reader's convenience, we give a comparison of the above formula with the one in \cite{Chou_Lee_2022_QKGVII}. Let $\phi_j$ be a divisor. The corresponding term in \cite{Chou_Lee_2022_QKGVII} gives
\[
\begin{split}
    & r^2\sum_{\ind \beta =1} \sum_{d=1}^{\infty} (\beta \cdot \phi_j) \Big( \GV_{d\beta}^{(1)} - \frac{\GV_{d\beta}^{(3)}}{d^2} \Big) Q^{rd\beta}
    \\
    & := r^2 \sum_{\ind \beta =1} \sum_{d=1}^{\infty} (\beta \cdot \phi_j) \sum_{k|d} \Big( \frac{d}{k} \GV_{d\beta/k} - \frac{d\GV_{d\beta/k}}{k^3} \Big) Q^{rd\beta}
    \\
    & = r^2 \sum_{\ind \beta =1} \sum_{d=1}^{\infty} \Big( \sum_{k|d} \GV_{d\beta/k}(\phi_j) - \GW_{d\beta}(\phi_j) \Big) Q^{rd\beta}.
\end{split}
\]
For the first equality, we use the definition $\GV_{d\beta}^{(\gamma)} := \sum_{k|d} (\frac{d}{k})^{\gamma} \GV_{d\beta/k}$. The second equality follows from the divisor equation and the relation between $\GW$ and $\GV$. One can see that the two formulas are equivalent.
\end{remark}

\begin{proposition} \label{prop_QK=GV_2pt} Let $\gamma_1 \in H^{2j}(X)$ and $\gamma_2 \in H^{2(m-K_X\cdot\beta -1-j)}(X)$. Let $\Gamma_i := \ch^{-1}(\gamma_i) \in K^0(X)_{\mathbb{Q}}$ for $i=1$ and $2$. We have 
\[
    \GV_{\beta}(\gamma_1,\gamma_{2}) = \QK_{\beta}(\Gamma_1,\Gamma_2) +  \delta_{K_X\cdot \beta,0} \sum_{\substack{r|\ind(\beta) \\ r\neq 1}} \mu(r) \QK_{\beta/r}(\Gamma_1,\Gamma_{2}).
\]
\end{proposition}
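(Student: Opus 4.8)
The plan is to imitate the proof of Proposition~\ref{prop_QK=GV_1pt}. I would compute $\QK_{\beta}(\Gamma_1,\Gamma_2)=\chi\bigl(\M_{0,2}(X,\beta);\ev_1^*\Gamma_1\otimes\ev_2^*\Gamma_2\otimes\O^{\vir}\bigr)$ by the virtual Kawasaki HRR formula and break the resulting graph sum over Kawasaki strata into the same three types: type $1$ (untwisted, i.e.\ fake), type $2$ (non-contracting stem), and type $3$ (contracting stem). The decisive numerical input is that the two insertions now have \emph{total} complex degree
\[
\deg_{\C}\gamma_1+\deg_{\C}\gamma_2=m-K_X\cdot\beta-1=\vdim\M_{0,2}(X,\beta),
\]
so that any strictly positive-degree correction --- coming either from the Todd twisting $\td(T^{\vir}_{\M})$ or from any $\arm$, $\leg_r$ or $\tail_\zeta$ insertion appearing through Propositions~\ref{prop_adelic_fake} and~\ref{prop_adelic_stem} (their positivity is Corollary~\ref{corollary_arm_leg_tail}) --- vanishes for dimension reasons, just as in the one-point case but with the role of the single top-degree class played by the pair $(\gamma_1,\gamma_2)$.

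First I would dispose of type $1$: by Proposition~\ref{prop_adelic_fake} it is $J^{\fake}(\mathbf t+\arm)$ evaluated with the insertions $\Gamma_1,\Gamma_2$, and the dimension count kills every $\arm$ term and the positive part of $\td(T^{\vir}_{\M})$, leaving $\int_{[\M_{0,2}(X,\beta)]^{\vir}}\ev_1^*\gamma_1\,\ev_2^*\gamma_2=\GW_\beta(\gamma_1,\gamma_2)$. Next, exactly as in Proposition~\ref{prop_QK=GV_1pt}, $\vdim\M_{0,n}(X,\beta/r)<\vdim\M_{0,n}(X,\beta)$ for $r\ge2$ forces the type $2$ and type $3$ contributions to vanish identically unless $-K_X\cdot\beta=0$; this produces the factor $\delta_{K_X\cdot\beta,0}$, and in the case $-K_X\cdot\beta>0$ one simply gets $\QK_\beta(\Gamma_1,\Gamma_2)=\GW_\beta(\gamma_1,\gamma_2)=\GV_\beta(\gamma_1,\gamma_2)$.

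When $-K_X\cdot\beta=0$ I would run the stem computation of Proposition~\ref{prop_QK=GV_1pt} via Proposition~\ref{prop_adelic_stem}, with $\Gamma_1$ at the $g$-twisted marked point and $\Gamma_2$ entering through $\delta_\zeta$ at the $g^{-1}$-twisted marked point; a marked point on a leg would be moved by the $\mathbb{Z}_r$-symmetry, so no other placement occurs. Only the constant terms of $\delta_\zeta$ and $\leg_r$ survive (Corollary~\ref{corollary_arm_leg_tail}), and combining the constant $r^{-(n-K_X\cdot\beta)}$ of Lemma~\ref{lemma_rpower} with the dilaton and string equations reduces each stem integral to ordinary Gromov--Witten invariants of $X$. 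Summing over primitive $r$-th roots of unity with $\sum_{\zeta^r=1}\zeta=\sum_{\zeta^r=1}\zeta^{-1}=\mu(r)$, $\sum_{\zeta^r=1}1=\phi(r)$ and the identities $\sum_{k\mid r}\phi(k)=r$, $\sum_{k\mid r}\mu(k)=\delta_{r,1}$, the three types recombine --- as in Proposition~\ref{prop_QK=GV_1pt}, but with each relevant power of $r$ lowered by one because of the extra insertion --- into
\[
\QK_\beta(\Gamma_1,\Gamma_2)=\sum_{r\mid\ind(\beta)}\GV_{\beta/r}(\gamma_1,\gamma_2).
\]
M\"obius inversion in the Novikov variable (now with the trivial multiplicity $1$ in place of the multiplicity $r$ of Proposition~\ref{prop_QK=GV_1pt}) then gives $\GV_\beta(\gamma_1,\gamma_2)=\sum_{r\mid\ind(\beta)}\mu(r)\,\QK_{\beta/r}(\Gamma_1,\Gamma_2)$, which is the assertion.

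The main obstacle is the same bookkeeping step as in Proposition~\ref{prop_QK=GV_1pt}: checking that the type $2$ and type $3$ stem contributions recombine with \emph{exactly} multiplicity $1$ rather than $r$. The substantive point is that the extra marked-point insertion costs one application of the dilaton equation (equivalently, one orbifold degree-shift factor $1/r$) relative to the one-point computation, and one must verify that this shift is uniform across the untwisted stem term, the type $C$ twisting term and the contracted-stem term, and that the way the top degree is split between $\gamma_1$ and $\gamma_2$ never enters beyond the global dimension count. Granting this, the argument is formally identical to Proposition~\ref{prop_QK=GV_1pt}; a fully explicit version of the stem bookkeeping appears in \cite[Section~4]{Chou_Lee_2022_QKGVI} and \cite[Section~8]{Givental_Tonita_2011}.
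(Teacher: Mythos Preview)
Your approach is the paper's approach and your endpoint $\QK_\beta(\Gamma_1,\Gamma_2)=\sum_{r\mid\ind(\beta)}\GV_{\beta/r}(\gamma_1,\gamma_2)$ is correct, but the intermediate bookkeeping is simpler than you anticipate from the one-point case. In the two-point case the type~3 contribution vanishes outright: the contracted-stem integral is $[\Gamma_1,\leg_r(L)(q{=}1),\Gamma_2]^{X,K}_{0,3,0}$, and since $\ch(\leg_r)\in H^{\ge 4}(X)[\![Q]\!]$ by Corollary~\ref{corollary_arm_leg_tail} while $\deg_{\C}\gamma_1+\deg_{\C}\gamma_2=m-1$, the total complex degree already exceeds $m=\dim X$. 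Likewise type~2 collapses to a single term: with \emph{both} stem endpoints carrying primary insertions $\Gamma_1,\Gamma_2$ there is no dilaton input $(1-\zeta^{-1}q)$ at the tail end, hence no $\sum_{\zeta}\zeta^{-1}=\mu(r)$ and no surviving type~$C$ nodal correction; one obtains directly
\[
\sum_{\substack{r\mid\ind(\beta)\\ r\ne1}}\ \sum_{\zeta^r=1}\bigl[\Gamma_1,\Gamma_2\bigr]^{X_\zeta}_{0,2,\beta/r}
=\sum_{\substack{r\mid\ind(\beta)\\ r\ne1}}\frac{\phi(r)}{r}\,\GW_{\beta/r}(\gamma_1,\gamma_2).
\]
The only arithmetic identity then needed is $\sum_{k\mid r}\phi(k)=r$, which converts $\sum_{r}\tfrac{\phi(r)}{r}\GW_{\beta/r}$ into $\sum_{r}\GV_{\beta/r}$. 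So the ``main obstacle'' you flag---checking that three separate stem pieces recombine with multiplicity $1$---dissolves: there is nothing to recombine, only the one type~2 term survives.
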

\begin{proof}
Using the same approach as Propositon~\ref{prop_QK=GV_1pt}, we compute $\QK_{\beta}(\Gamma_1,\Gamma_2)$ from the type 1, type 2, and type 3 contributions in VKHRR.

The type 1 contribution gives $\GW_{\beta}(\gamma_1,\gamma_{2})$ since there is no twisting contribution for dimensional reason.

The type 2 contribution has two cases. If $-K_X\cdot \beta=0$, we have 
\[
    \sum_{\substack{r|\ind(\beta) \\ r\neq 1}} \sum_{\zeta^r = 1} \Big[ \Gamma_1, \Gamma_2 \Big]^{X_{\zeta}}_{0,2,\beta / r} = \sum_{\substack{r|\ind(\beta) \\ r\neq 1}} \frac{\phi(r)}{r} \GW_{\beta / r}(\gamma_1, \gamma_2).
\]
If $-K_X\cdot\beta >0$, we have
\[
    \sum_{\substack{r|\ind(\beta) \\ r\neq 1}} \sum_{\zeta^r = 1} \Big[ \Gamma_1, \Gamma_2 \Big]^{X_{\zeta}}_{0,2,\beta / r} =0
\]
for dimensional reason.

There is no type 3 contribution for dimensional reason. More precisely, we have
\[
\Big[ \Gamma_1, \leg_r (L)(q=1), \Gamma_{2} \Big]^{X,K}_{0,3,0}=0
\]
since $\ch (\leg (L))(q=1) \in H^{\geq 2(2+K_X\cdot\beta)}(X)[\![Q ]\!]$ by Corollary~\ref{corollary_arm_leg_tail}.

Combine all three contributions, we have
\[
\begin{split}
    \QK_{\beta}(\Gamma_1, \Gamma_2) &= \GW_{\beta}(\gamma_1, \gamma_2) + \delta_{K_X\cdot\beta,0} \sum_{\substack{r|\ind(\beta) \\ r\neq 1}} \frac{\phi(r)}{r} \GW_{\beta/r}(\gamma_1, \gamma_2)
    \\
    &=\GV_{\beta}(\gamma_1,\gamma_2) + \delta_{K_X\cdot\beta,0} \sum_{\substack{r|\ind(\beta) \\ r\neq 1}} \GV_{\beta/r}(\gamma_1,\gamma_2).
\end{split}
\]
In the last equality, we rewrite all $\GW$ in terms of $\GV$ and use the following identity:
\[
    \sum_{k|r} \frac{\phi(k)}{r} = 1.
\]
The proposition follows from the M\"obius inverse formula.
\end{proof}
\begin{proposition} \label{prop_QK=GV_npt} Let $\gamma_i \in H^*(X)$ for $i=1,\dots,n$ with $n\geq 3$ and let $\Gamma_i := \ch^{-1}(\gamma_i) \in K^0(X)_{\mathbb{Q}}$. We have
\[
    \GV_{\beta}(\gamma_1,\dots,\gamma_n) = \QK_{\beta}(\Gamma_1,\dots,\Gamma_n) +   \delta_{K_X\cdot \beta,0} \sum_{\substack{r|\ind(\beta) \\ r\neq 1}}\mu(r) r^{n-3}\QK_{\beta/r}(\Gamma_1,\dots,\Gamma_{n}).
\]
\end{proposition}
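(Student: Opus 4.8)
The plan is to mimic the structure of the proofs of Propositions~\ref{prop_QK=GV_1pt} and~\ref{prop_QK=GV_2pt}: compute $\QK_{\beta}(\Gamma_1,\dots,\Gamma_n)$ via VKHRR by splitting the Kawasaki graph sum into the three types, and then convert the resulting $\GW$-expression into $\GV$-form by M\"obius inversion. First I would observe that since $n \geq 3$, the $n$ insertions $\gamma_1,\dots,\gamma_n$ have no constraint forcing any of them to top degree, so the dimensional bookkeeping is governed entirely by the semi-positivity hypothesis via Lemma~\ref{lemma_vanishing} and Corollary~\ref{corollary_arm_leg_tail}. The type 1 contribution gives $\GW_{\beta}(\gamma_1,\dots,\gamma_n)$: the $\td(T_{\M})$ twisting contributes nothing because $\arm(q)$ starts in degree $\geq 2(2+K_X\cdot\beta)$ while $J^{\fake}(\mathbf{t}+\arm)$ paired against the $\gamma_i$ would need total codimension exceeding $\vdim$, so only the leading term of $\mathbf{t}+\arm$ survives and reproduces the cohomological $\GW$ invariant.

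Next I would handle the type 2 and type 3 contributions. As in the earlier propositions, both vanish unless $K_X\cdot\beta=0$, because for $r \geq 2$ we have $\vdim\,\M_{0,n}(X,\beta/r) < \vdim\,\M_{0,n}(X,\beta)$ strictly when $-\beta\cdot K_X>0$, killing every reduced stratum. When $K_X\cdot\beta=0$: for type 2 the stem space is $\M^X_{0,2,\beta/r}(\zeta)$ carrying only the two heavy insertions $\Gamma_1,\dots$ — wait, with $n$ insertions it is $\M^X_{0,n+2,\beta/r}(\zeta)$ with insertions $\Gamma_1$, then $\leg_r$ at the $n-1$ middle... actually here all $n$ original markings remain, giving $[\Gamma_1,\dots,\Gamma_n,\delta_\zeta(L^{1/r})]^{X_\zeta}_{0,n+1,\beta/r}$ and its variants. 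Using Lemma~\ref{lemma_rpower} the $r$-power constant is $r^{-(n-1)}$ here (since there are $n-1$ "extra" legs beyond the two distinguished ones in the normal bundle count), the untwisted part contributes the constant term of $\delta_\zeta$ which is $1-\zeta^{-1}$, summing over $\zeta$ via $\sum_{\zeta^r=1}\zeta^{-1}=\mu(r)$ and $\sum 1 = \phi(r)$, and the type $C$ twisting contributes the remaining terms exactly as in Proposition~\ref{prop_QK=GV_1pt}. The type 3 contribution pairs $\Gamma_1,\dots,\Gamma_n$ with $\leg_r(L)(q=1)$ and the constant part of $\delta_\zeta$ on $\M_{0,n+1,0}$; here one uses Remark~\ref{rmk}'s formula for $\Coeff(Q^\beta;\leg_r(L)(q=1))$.

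Assembling all three and writing every $\GW$ in terms of $\GV$, the $r$-dependence will combine through the number-theoretic identities $\sum_{k|r}\phi(k)=r$ and $\sum_{k|r}\mu(k)=0$ to collapse into a clean relation of the shape
\[
\QK_{\beta}(\Gamma_1,\dots,\Gamma_n) = \GV_{\beta}(\gamma_1,\dots,\gamma_n) + \delta_{K_X\cdot\beta,0}\sum_{\substack{r\mid\ind(\beta)\\ r\neq 1}} r^{\,3-n}\,\GV_{\beta/r}(\gamma_1,\dots,\gamma_n),
\]
consistent with the $n=1,2$ cases where the coefficients were $r$ and $1$ respectively. The stated formula then follows by M\"obius inversion on the multiplicative structure in $\ind(\beta)$. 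The main obstacle I expect is the precise bookkeeping of the $r$-power constant from Lemma~\ref{lemma_rpower} in the $n$-point setting and verifying that the three contributions recombine with exactly the coefficient $r^{3-n}$ rather than some other power — in particular, tracking how many marked points feed into the normal bundle rank $(r-1)(n'-K_X\cdot\beta)$ for the relevant stem and contracted strata, and confirming that the type 3 term's $\leg_r$-insertion formula interacts correctly with the $(n+1)$-pointed genus zero classes. Once the coefficient is pinned down, the number-theoretic simplification and the final M\"obius inversion are routine.
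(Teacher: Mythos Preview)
Your overall strategy --- apply VKHRR, split into types 1/2/3, then M\"obius-invert --- matches the paper's. The gap is in your expectation for the type~2 and type~3 contributions. For $n\geq 3$ the paper's proof is much shorter than you anticipate: \emph{both} type~2 and type~3 vanish outright, so that directly
\[
\QK_{\beta}(\Gamma_1,\dots,\Gamma_n)=\GW_{\beta}(\gamma_1,\dots,\gamma_n).
\]
The factor $r^{n-3}$ in the stated formula then arises \emph{entirely} from rewriting $\GW$ in terms of $\GV$ via the Klemm--Pandharipande relation $\GW_{\beta}=\sum_{r\mid\ind(\beta)} r^{\,n-3}\,\GV_{\beta/r}$ (for $K_X\cdot\beta=0$), followed by M\"obius inversion. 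No stem arithmetic is needed. Your proposed relation $\QK_{\beta}=\sum_r r^{3-n}\GV_{\beta/r}$ is doubly off: at $n=1,2$ the exponent $3-n$ gives $r^2,r$, not the $r,1$ you cite (the actual pattern from Propositions~\ref{prop_QK=GV_1pt}--\ref{prop_QK=GV_2pt} is $r^{2-n}$); and M\"obius-inverting either $r^{3-n}$ or $r^{2-n}$ produces $r^{3-n}$ (resp.\ $r^{2-n}$) in the final formula, contradicting the proposition's $r^{n-3}$ for every $n\geq 4$. The exponent genuinely flips sign at $n=3$ because the mechanism changes: for $n\leq 2$ the $r$-power is manufactured by the twisted strata, while for $n\geq 3$ those strata die and the $r$-power is simply the multiple-cover exponent built into the definition of $\GV$.

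The reason type~2/3 vanish when $n\geq 3$ is a dimension count. On any Kawasaki stratum with $g\neq\mathrm{id}$ the $n$ labeled markings must sit at $g$-fixed loci; the stem carries only two such points (its endpoints), so at least $n-2\geq 1$ markings are forced onto a $g$-trivial tail. In the language of Proposition~\ref{prop_adelic_stem}, the insertions $\Gamma_2,\dots,\Gamma_n$ can enter only through the single $\delta_{\zeta}$ slot (the $\leg_r$'s carry $\mathbf{t}=0$), and extracting two or more distinct $\Gamma_i$'s from one $\delta_{\zeta}$ forces the $\tail_{\zeta}$ contribution. Either viewpoint gives, via Corollary~\ref{corollary_arm_leg_tail}, that the total insertion degree is at least $\sum_i \deg\gamma_i = \vdim\,\M_{0,n}(X,\beta)$, which strictly exceeds the virtual dimension of every such stratum --- exactly the mechanism by which type~3 already vanished in Proposition~\ref{prop_QK=GV_2pt}, now applying to type~2 as well.
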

\begin{proof}
Using the same approach as Propositon~\ref{prop_QK=GV_1pt}, we compute $\QK_{\beta}(\Gamma_1,\dots,\Gamma_n)$ from the type 1, type 2, and type 3 contributions in VKHRR.

Only type 1 contribution is nonvanishing and we have
\[
\begin{split}
    \QK_{\beta}(\Gamma_1,\dots,\Gamma_n) &= \GW_{\beta}(\gamma_1,\dots,\gamma_n).
\end{split}
\]
The proposition follows from rewriting $\GW$ to $\GV$ and apply M\"obius inverse formula.
\end{proof}

\subsection{Conclusion of the proof} Let $X$ be a smooth projective variety. We claim that for any $\gamma \in H^{2i}(X;\mathbb{Z}) / {\rm torsion} \subset H^{2i}(X;\mathbb{Q})$. There exists $\Gamma \in K^0(X)$ such that $\ch(\Gamma) = \gamma$ (mod $ H^{>2i}(X;\mathbb{Q}) $). One can prove it by giving $X$ a CW complex structure and  induction on the number of cells on $X$. See the argument in \cite[Proposition~4.5]{Hatcher_VB_Ktheory}. 

Theorem~\ref{Theorem} follows from the formulas in Proposition~\ref{prop_QK=GV_1pt}, Proposition~\ref{prop_QK=GV_2pt}, and Proposition~\ref{prop_QK=GV_npt} with only one difference. Instead of defining $\Gamma_i$ as $\ch^{-1}(\gamma_i)$ which defines over $\mathbb{Q}$, we take $\Gamma_i$ from the preceding claim. Note that the higher degree ambiguity will not change the computation in Proposition~\ref{prop_QK=GV_1pt}, Proposition~\ref{prop_QK=GV_2pt}, and Proposition~\ref{prop_QK=GV_npt} for dimensional reason. The integrality of quantum $K$-invariants then implies the integrality of Gopakumar-Vafa type invariants. This completes the proof.

\bibliographystyle{alpha}
\bibliography{zbib}

\begin{thebibliography}{CdlOGP92}

\bibitem[AM93]{Aspinwall_Morrison}
Paul~S. Aspinwall and David~R. Morrison.
\newblock Topological field theory and rational curves.
\newblock {\em Comm. Math. Phys.}, 151(2):245--262, 1993.

\bibitem[BCOV93]{Bershadsky_Cecotti_Ooguri_Vafa}
M.~Bershadsky, S.~Cecotti, H.~Ooguri, and C.~Vafa.
\newblock Holomorphic anomalies in topological field theories.
\newblock {\em Nuclear Phys. B}, 405(2-3):279--304, 1993.

\bibitem[Bry01]{Bryan}
Jim Bryan.
\newblock Multiple cover formulas for {G}romov-{W}itten invariants and {BPS}
  states.
\newblock In {\em Proceedings of the {W}orkshop ``{A}lgebraic {G}eometry and
  {I}ntegrable {S}ystems related to {S}tring {T}heory'' ({K}yoto, 2000)},
  number 1232, pages 144--159, 2001.

\bibitem[CdlOGP92]{Candelas_Ossa_Green_Parkes}
Philip Candelas, Xenia~C. de~la Ossa, Paul~S. Green, and Linda Parkes.
\newblock A pair of {C}alabi-{Y}au manifolds as an exactly soluble
  superconformal theory.
\newblock In {\em Essays on mirror manifolds}, pages 31--95. Int. Press, Hong
  Kong, 1992.

\bibitem[CL22a]{Chou_Lee_2022_QKGVII}
Y.-C. Chou and Y.-P. Lee.
\newblock Quantum {$K$}-invariants and {G}opakumar-{V}afa invariants {II}.
  {C}alabi-{Yau} threefolds at genus zero.
\newblock {\em arXiv:2305.08480}, 2022.

\bibitem[CL22b]{Chou_Lee_2022_QKGVI}
You-Cheng Chou and Yuan-Pin Lee.
\newblock Quantum {$K$}-invariants and {G}opakumar-{V}afa invariants {I}. {T}he
  quintic threefold.
\newblock {\em arXiv:2211.00788}, 2022.

\bibitem[FP00]{Faber_Pandharipande}
C.~Faber and R.~Pandharipande.
\newblock Hodge integrals and {G}romov-{W}itten theory.
\newblock {\em Invent. Math.}, 139(1):173--199, 2000.

\bibitem[Giv00]{Givental_2000}
Alexander Givental.
\newblock On the {WDVV} equation in quantum {$K$}-theory.
\newblock volume~48, pages 295--304. 2000.
\newblock Dedicated to William Fulton on the occasion of his 60th birthday.

\bibitem[GP99]{Graber_Pandharipande}
T.~Graber and R.~Pandharipande.
\newblock Localization of virtual classes.
\newblock {\em Invent. Math.}, 135(2):487--518, 1999.

\bibitem[GT14]{Givental_Tonita_2011}
Alexander Givental and Valentin Tonita.
\newblock The {H}irzebruch-{R}iemann-{R}och theorem in true genus-0 quantum
  {K}-theory.
\newblock In {\em Symplectic, {P}oisson, and noncommutative geometry},
  volume~62 of {\em Math. Sci. Res. Inst. Publ.}, pages 43--91. Cambridge Univ.
  Press, New York, 2014.

\bibitem[GV98]{Gopakumar_Vafa_1998}
Rajesh Gopakumar and Cumrun Vafa.
\newblock Symplectic geometry of frobenius structures.
\newblock {\em arXiv:hep-th/9812127}, pages 1--19, 12 1998.

\bibitem[Hat03]{Hatcher_VB_Ktheory}
Allen Hatcher.
\newblock Vector bundles and k-theory.
\newblock {\em Im Internet unter http://www. math. cornell. edu/\~{} hatcher},
  2003.

\bibitem[IP18]{Ionel_Parker_2018}
Eleny-Nicoleta Ionel and Thomas~H. Parker.
\newblock The {G}opakumar-{V}afa formula for symplectic manifolds.
\newblock {\em Ann. of Math. (2)}, 187(1):1--64, 2018.

\bibitem[Kaw79]{Kawasaki_1979}
Tetsuro Kawasaki.
\newblock The {R}iemann-{R}och theorem for complex {$V$}-manifolds.
\newblock {\em Osaka Math. J.}, 16(1):151--159, 1979.

\bibitem[KP08]{Klemm_Pandharipande_2008}
A.~Klemm and R.~Pandharipande.
\newblock Enumerative geometry of {C}alabi-{Y}au 4-folds.
\newblock {\em Comm. Math. Phys.}, 281(3):621--653, 2008.

\bibitem[Lee04]{Lee_2004}
Y.-P. Lee.
\newblock Quantum {$K$}-theory. {I}. {F}oundations.
\newblock {\em Duke Math. J.}, 121(3):389--424, 2004.

\bibitem[Ton14]{Tonita_VKHRR}
Valentin Tonita.
\newblock A virtual {K}awasaki-{R}iemann-{R}och formula.
\newblock {\em Pacific J. Math.}, 268(1):249--255, 2014.

\bibitem[Voi96]{Voisin}
Claire Voisin.
\newblock A mathematical proof of a formula of {A}spinwall and {M}orrison.
\newblock {\em Compositio Math.}, 104(2):135--151, 1996.

\end{thebibliography}

\end{document}